\documentclass[11pt]{amsart}
\usepackage{amsmath,amssymb, amsthm,delarray}
\usepackage{graphicx}
\numberwithin{equation}{section}
\newtheorem{thm}{Theorem}[section]
\newtheorem{lemma}[thm]{Lemma}
\newtheorem{remark}[thm]{Remark}
\newtheorem{corollary}[thm]{Corollary}
\newtheorem{proposition}[thm]{Proposition}
{\rm}
\newtheorem{example}{Example}{\rm}
{\rm}

\def\beq{\begin{equation} }
\def\eeq{\end{equation} }
\def\e{\hbox{\rm e}}

\def\C{\mathbb{C}}

\def\N{\mathbb{N}}
\def\M{\mathbf{M}}
\def\R{\mathbb{R}}
\def\A{\mathbb{A}}
\def\P{\mathbf{P}}
\def\si{\mathbf{\Sigma}}
\def\K{\mathbf{K}}
\def\B{\mathbf{B}}
\def\Q{\mathbf{Q}}
\def\M{\mathbf{M}}

\def\G{\mathcal{G}}

\def\v{\mathbf{v}}
\def\f{\mathbf{f}}
\def\h{\mathbf{h}}

\def\u{\mathbf{u}}
\def\f{\mathbf{f}}
\def\w{\mathbf{w}}

\def\x{\mathbf{x}}

\def\y{\mathbf{y}}
\def\z{\mathbf{z}}
\def\s{\mathcal{S}}

\def\dis{\displaystyle}

\def\supp{{\rm supp}\,}

\def\R{\mathbb{R}}
\def\B{\mathbf{B}}
\def\C{\mathbf{C}}
\def\X{\mathbf{X}}

\def\det{{\rm det}\,}
\def\vol{{\rm vol}\,}
\def\dis{\displaystyle}
\def\tv{\tilde{\v}}

\begin{document}
\title[positively homogeneous functions]{Level sets and non Gaussian integrals of positively homogeneous functions}
\author{J.B. Lasserre}
\address{LAAS-CNRS and Institute of Mathematics\\
University of Toulouse,
7 Avenue du Colonel Roche, 310777 Toulouse Cedex 4, France.}
\email{lasserre@laas.fr}

\begin{abstract}
We investigate various properties
of the sublevel set $\{\x \,:\,g(\x)\leq 1\}$ 
and the integration of $h$ on this sublevel set when $g$ and $h$
are positively homogeneous functions. For instance, the latter integral
reduces to integrating $h\exp(-g)$ on the whole space $\R^n$ (a non Gaussian integral)
and when $g$ is a polynomial, then the volume of the sublevel set is a convex function
of the coefficients of $g$. In fact, whenever $h$ is nonnegative,
the functional $\int \phi(g(\x))h(\x)d\x$ is a convex function of $g$ for a large class of 
functions $\phi:\R_+\to\R$.
We also provide a numerical approximation scheme to
compute the volume or integrate $h$  (or, equivalently to approximate the associated non Gaussian integral).
We also show that finding the sublevel set $\{\x \,:\,g(\x)\leq 1\}$ 
of minimum volume that contains some given subset $\K$ is a (hard) convex optimization problem
for which we also propose two convergent numerical schemes. Finally, we provide a Gaussian-like property
of non Gaussian integrals for homogeneous polynomials that are sums of squares and critical points of a
specific function.
\end{abstract}

\keywords{Positively homogeneous functions; sublevel sets; non gaussian integrals; volume; convex optimization}
\subjclass{26B15 65K10 90C22 90C25}

\maketitle


\section{Introduction}

{\it Positively homogeneous} functions (PHF) form a wide class of functions that one encounters in many applications.
As a consequence of homogeneity, they enjoy very particular properties, and among them the celebrated 
and very useful Euler's identity which allows to deduce additional properties of PHFs
in various contexts. One goal of this paper is to show that another not well-known property of PHFs
yields surprising and unexpected results, some of them already known in particular cases.
Namely, we are  concerned with PHFs, their sublevel sets and in particular, the integral
\begin{equation}
\label{def-theta}
y\mapsto I_{g,h}(y)\,:=\,\int _{\{\x\,:\,g(\x)\leq y\}}h(\x)\,d\x,\end{equation}
as a function $I_{g,h}:\R_+\to\R$ when $g,h$ are PHFs.
With $y$ fixed, we are also interested in  $I_{g,h}(y)$ now as a function of $g$, especially when $g$ is a nonnegative homogeneous polynomial.
Nonnegative homogeneous polynomials are particularly interesting as they can be used to approximate norms; see e.g. 
Barvinok \cite[Theorem 3.4]{barvinok}.
Interestingly, the integral (\ref{def-theta}) is related in a simple and remarkable manner to the non-Gaussian integral
$\int_{\R^n}h\exp(-g)d\x$ and therefore, any information on either one can help understanding and evaluating the other.

Functional integrals appear frequently in quantum Physics. In the important particular case  when $h=1$,
Morosov et Shakirov \cite{morosov1}
proved that for all {\it forms} $g$ of degree $k$, 
\begin{equation}
\label{first}
I_{g,1}(1)={\rm cte}(k)\cdot\int_{\R^n} \exp(-g)d\x,\end{equation}
where the constant depends only on $k$. 
In fact, a formula of exactly the same flavor was
already known for convex sets, and was the initial motivation and starting point of this paper.
Namely, 
if $C\subset\R^n$ is convex, its support function 
$\sigma_C$ is a PHF of degree $1$, and the polar 
$C^\circ\subset\R^n$ of $C$ is the convex set $\{\x\,:\,\sigma_C(\x) \leq 1\}$. Then
\[\vol(C^\circ)\,=\,\frac{1}{n{\rm !}}\dis\int_{\R^n}\exp(-\sigma_C(\x))\,d\x,\qquad \forall C.\]
(See e.g. Hiriart-Urruty \cite[Exercise 65, p. 243-244]{jbhu} and Barvinok \cite[Problem 4, p. 207]{barvinok}.)
In fact, as we will see, the intriguing result (\ref{first}) is a particular case of a more general and striking result about PHFs.

The connection between 
$I_{g1}$ and $\int \exp(-g)$ is a consequence of what 
the authors in \cite{morosov1} call  {\it action-independence} of integral discriminants. That is, for a very general class of univariate functions $\phi$, and for every form $g$ of degree $d$,
\begin{equation}
\label{state}
\int \phi(g(\x))d\x\,=\,C(\phi,d)\cdot\theta(g),\end{equation}
for some function $\theta$, where the constant $C(\phi,d)$ depends only on $\phi$ and $d$; see \cite[(73)]{morosov1}.
And so for instance, with the choices $t\mapsto \phi_1(t)=1_{[0,1]}(t)$ and
$\phi_2(t)=\exp(-t)$, and with $\R^n$ as integration contour,
\[\int_{\{\x\,:\,g(\x)\leq 1\}}d\x\,=\,\frac{C(\phi_1,d)}{C(\phi_2,d)}\,\int_{\R^n} \exp(-g)\,d\x,\qquad \forall g,\]
whenever any of the above integrals finite.
In Morosov and Shakirov \cite{morosov1} the motivation of the authors was not to connect
$I_{g1}$ with $\int \exp(-g)$ but rather to study
a specific class of non-Gaussian integrals,
the so-called {\it integral discriminants}. 
In fact, the theory of integral discriminants 
which lies in the field of non linear algebra, extends to integrals
on more general contours of integration, and
Ward identities permits to study properties of such integrals that are invariant under a change of contour of integration.
Hence the goal in \cite{morosov1} is to provide exact formulas for integral discriminants  in terms of 
algebraic invariants of the form $g$ involved in the exponent; several highly non trivial examples are provided in 
\cite{morosov2,shakirov} and some of their results are further analyzed in Fuji \cite{fujii} and Stoyanovsky \cite{stoya}. 

\subsection*{Contribution} 

In the present paper we do not study non Gaussian integrals 
from the point of view of algebraic invariants as in e.g. \cite{morosov1,morosov2,shakirov}.
We are  interested in properties of the integral (\ref{def-theta}) (and variants as well) and its associated non-Gaussian integral 
$\int_{\R^n}h\exp(-g)d\x$
for PHFs that are not necessarily polynomials.
Among other things, we are also interested in methods for their evaluation (or approximation). More precisely:

(a) We first extend the action-independence property (\ref{state}) to 
PHFs of degree $0\neq d\in\R$.
Namely, given 
a measurable mapping $\phi:\R_+\to\R$, and $g,h$ PHFs of respective degree $0\neq d,p\in\R$ and such that $\int\vert h\vert \exp(-g)d\x$ is finite, 
\begin{equation}
\label{newaction}
\int_{\R^n} \phi(g(\x))h(\x)\,d\x\,=\,C(\phi,d,p)\cdot\int_{\R^n}h\exp(-g)\,d\x,\end{equation}
where the constant $C(\phi,d,p)$ depends only on $\phi,d,p$.
In particular, if the sublevel set $\{\x\,:\,g(\x)\leq 1\}$ is bounded, 
then for every nonnegative $y\in\R$,
\begin{equation}
\label{general1}
\dis\int_{\{\x\,:\: g(\x)\leq y\}}h\,d\x\,=\,
\frac{y^{(n+p)/d}}{\Gamma(1+(n+p)/d)}\dis\int_{\R^n}h\exp(-g)\,d\x,
\end{equation}
with $\Gamma$ being the standard Gamma function\footnote{The Gamma function 
$\Gamma:\R\to\R$ is defined by $a\mapsto \Gamma(a):=\int_0^\infty t^{a-1}\exp(-t)dt$, for every $a>-1$.}. 
And so the (Lebesgue) volume of the sublevel set $\{\x\,:\,g(\x)\leq y\}$ is given by:
\begin{equation}
\label{general2}
\vol(\{\x\,:\: g(\x)\leq y\})\,=\,\frac{y^{n/d}}{\Gamma(1+n/d)}\dis\int_{\R^n}\exp(-g)\,d\x.
\end{equation}
As already mentioned, when $h(\x)=1$ for all $\x$, and $g$ is a positive definite form, (\ref{general2}) was already proved in Morosov et Shakirov \cite[(75) p. 39]{morosov1}. In fact, the same arguments as in \cite{morosov1} can be adapted
to PHFs $g$ and with $h\neq1$. The same technique applies for the integral
\[I_{g_1,\ldots,g_m,h}:=\int_{\{\x\,:\,g_k(\x)\leq 1,\:k=1,\ldots,m\}}hd\x,\]
for a finite family $(g_k),h$ of PHFs with same degree $0<d\in\R$.

In addition,  we obtain the identity
\[\frac{\dis\int_{\{\x\,:\,g(\x)\leq y\}}  \exp(-g)\,h\,d\x}{\dis\int_{\R^n}\,\exp(-g)\,h\,d\x}\,=\,\frac{\dis\int_0^y\,\exp(-z)z^{(n+p)/d-1}\,dz}
{\Gamma((n+p)/d)},\]
which expresses how fast $\mu(\{\x\,:\,g(\x)\leq y\})$ converges to
$\mu(\R^n)(=\int\exp(-g)hd\x)$ when $y\to\infty$ and $\mu$ has the non Gaussian density $h\exp(-g)$ with respect to the Lebesgue measure. It is the same speed with which the truncated (Gamma) integral $\int_0^y\,\exp(-z)z^{(n+p)/d-1}\,dz$
converges to $\Gamma((n+p)/d)$.

(b) We also provide {\bf an alternative and simple proof  based on Laplace transform}
(in the spirit of Lasserre and Zeron \cite{laplace} to provide explicit expressions for certain integrals
on specific domains like e.g., a simplex or an ellipsoid). This proof permits
to interpret (\ref{first}) as duality result in the usual $(\cdot,+)$-algebra
which parallels duality in convex optimization, i.e.,  in the $(\max,+)$-algebra.
This complements the list of the many remarkable properties of homogeneous functions; for instance 
in \cite{pams} for integration, 
in \cite{jbhu-las} for optimization, and in \cite{jca} for several properties of some {\it conjugates} of homogeneous functions.

(c)  {\bf Convexity:} (\ref{general1}) also helps analyze properties of the mapping
$f_h: C_d\to \R$,
\[g\mapsto f_h(g):=\int_{\{\x\,:\,g(\x)\leq 1\}} h(\x)\,d\x,\qquad \forall\,g\in C_d,\]
where $C_d$ is the space of nonnegative PHFs of degree $0\neq d\in\R$ with bounded sublevel set $\{\x\,:\,g(\x)\leq 1\}$,
and $h$ is a given PHF.
Whenever $h$ is nonnegative, $f_h$ is strictly convex and when $g$ is a homogeneous polynomial, 
we provide an explicit expression of its gradient and Hessian on the interior of its domain.
This is the analogue in our context of a formula already provided in M\"uller et al. \cite{muller} when the integration domain is a polytope.

This convexity property is important. For instance, given $\K\subset\R^n$, it shows that 
the problem of computing a homogeneous polynomial $g$ such that $\{\x:g(\x)\leq1\}$  
contains $\K$ and has minimum volume,
is a finite-dimensional convex optimization problem (but hard to solve even though it is convex)!
This problem has received a lot of attention with an elegant solution in the case $d=2$
(that is, finding the {\em ellipsoid} of minimum volume).
Notice that here with $d>2$ the problem is still convex even though 
the sublevel set $\{\x:g(\x)\leq 1\}$ is {\it not} required to be convex.
We present  two numerical approximation schemes
based on semidefinite programming, that provide sequences of upper and lower bounds that converge to minimum volume.

(d) {\bf Polarity:} When $g$ is a convex PHF of degree $d\in\R$ with compact sublevel set  $G=\{\x :g(\x)\leq 1\}$,
the polar $G^\circ$ is a certain sublevel set of the Legendre-Fenchel conjugate $g^*$ of $g$,
which is itself a PHF of degree $q$ with $1/d+1/q=1$ and therefore we also obtain 
the volume of $G^\circ$ in the form of non 
Gaussian integral, the {\it dual} analogue of the one obtained for $G$.

(e) {\bf Numerical approximation:} Evaluating a non Gaussian integral $\int \exp(-g)$ is a very hard problem even if in some cases 
exact results can be obtained from some algebraic invariants  
of $g$, as in e.g. \cite{morosov1,morosov2,shakirov}. However, it turns out that (\ref{general1}) is indeed very useful
to approximate non Gaussian integrals from polynomials. Indeed, when $h$ is a polynomial and $g$ is a homogeneous polynomial, we show how to approximate the non Gaussian integral $\int h\exp(-g)d\x$, as closely as desired, 
by solving a hierarchy of so-called semidefinite programs. This numerical approximation  scheme 
complements the exact results mentioned above.

(f) {\bf A Gaussian-like property:} Finally we also prove a (again intriguing) Gaussian-like property of homogeneous polynomials that are sums of squares (SOS).
Recall that every homogeneous and SOS polynomial $g$ of degree $2d$ can be written 
as $\tv_d(\x)^T\si\tv_d(\x)$ for some positive semidefinite matrix $\si$ (not unique in general)
and where $\tv_d(\x)$ is the vector of all monomials of degree $d$. If we now consider the 
function 
\[\si\mapsto\qquad \theta(\si)=({\rm det}\,\si)^k\int\exp(-k\tv_d(\x)^T\si\tv_d(\x)),\]
where $k=n/(2d\ell(d))$ (with $\ell(d)={n+d-1\choose d}$), then its critical points are 
the positive semidefinite matrices $\si$ such that $\si^{-1}$ is the matrix of moments of order $2d$ 
associated with the non Gaussian density
$\exp(-k\tv_d(\x)^T\si\tv_d(\x))$ (normalized to be a probability density) ... exactly like in the Gaussian case 
(i.e. when $d=1$) where $\si^{-1}$ is a {\it covariance} matrix!
(In the latter case, the function $\theta$ is constant, hence all points $\si$ are critical).

\section{sublevel sets and non Gaussian integrals}

\subsection{Notation and definitions}

Let $\R[\x]$ be the ring of polynomials in the variables $\x=(x_1,\ldots,x_n)$ and let
$\R[\x]_d$ be the vector space of polynomials of degree at most $d$
(whose dimension is $s(d):={n+d\choose n}$).
For every $d\in\N$, let  $\N^n_d:=\{\alpha\in\N^n:\vert\alpha\vert \,(=\sum_i\alpha_i)=d\}$, 
and
let $\v_d(\x)=(\x^\alpha)$, $\alpha\in\N^n$, be the vector of monomials of the canonical basis 
$(\x^\alpha)$ of $\R[\x]_{d}$. 
Denote by  $\s_k$ the space of $k\times k$ real symmetric matrices with
scalar product $\langle \B,\C\rangle={\rm trace}\,(\B\C)$; also, the notation $\B\succeq0$ 
(resp. $\B\succ0$) stands for $\B$ is positive semidefinite (resp. positive definite).

A polynomial $f\in\R[\x]_d$ is written
\[\x\mapsto f(\x)\,=\,\sum_{\alpha\in\N^n}f_\alpha\,\x^\alpha,\]
for some vector of coefficients $\f=(f_\alpha)\in\R^{s(d)}$.

A real-valued continuous function $f:\R^n\to\R$ is homogeneous (resp. {\it positively} homogeneous) of degree $k$ ($k\in\R$)
if $f(\lambda\x)=\lambda^kf(\x)$ for all $\lambda\neq0$ (resp. for all $\lambda>0$) and all $\x\in\R$. 
For instance if $\x\mapsto f(\x)$ is homogeneous of degree $k$, then $\vert f\vert$ is positively homogeneous
of degree $k$ (but not homogeneous). Let us denote by:
\begin{itemize}
\item $C_d$ the convex cone of PHFs $g$ of degree $d\in\R$ with bounded sublevel set
$G:=\{\x\,:\,g(\x)\leq 1\}$. Notice that necessarily $d\neq0$ and any $g\in C_d$ is nonnegative. Indeed, 
if $d=0$ then $\x\in G$ implies $\lambda\x\in G$ for every $\lambda>0$, in contradiction
with boundedness of $G$. Similarly, assume that $g(\x_0)<0$ for some 
some $\x_0$ (hence $\x_0\in G$). As $g(\lambda\x_0)=\lambda^dg(\x_0)<0$ for every $\lambda>0$,
$\lambda\x_0\in G$ for every $\lambda>0$, again in contradiction with boundedness of $G$.
\item $\P[\x]_d\subset \R[\x]_d$, $d\in\N$, the convex cone of homogeneous polynomials of degree $d$ with compact 
sublevel set $\{\x\,:\,g(\x)\leq 1\}$.
Hence, $\P[\x]_d\subset C_d$ whenever $d\in\N$.
\end{itemize}

Remember that a convex function is {\it proper} if $f(\x)>-\infty$ for all $\x$ and $f(\x)<+\infty$ for some $\x$. 
A proper convex function is closed if it is lower semicontinuous; see Rockafellar \cite{rockafellar}.
 
In the sequel we will need the following result which extends to PHFs of degree $d\in\R$,
one already used in \cite{morosov1} for forms 
of degree $d\in\N$ (and also  used
in Hiriart-Urruty \cite{jbhu} for the volume of the polar of a convex set).

\begin{lemma}
\label{lemma0}
Let $d,p\in\R$ with $d\neq0$, and let $\phi:\R_+\to\R_+$ be a measurable function such that 
\[\int_0^\infty t^{n+p-1}\phi(t^d)\,dt<\infty,\] 
and let $g,h$ be nonnegative PHFs of degree $0\neq d\in\R$ and $p\in\R$ respectively. The
functional $\dis\int_{\R^n} \phi(g(\x))h(\x)\,d\x$ is finite if and only if $\dis\int_{\R^n}h\exp(-g)d\x<+\infty$, in which case:
\begin{equation}
\label{expo}
\int_{\R^n} \phi(g(\x))h(\x)\,d\x
\,=\,\underbrace{\frac{d\dis\int_0^\infty t^{n+p-1}\phi(t^d)\,dt}{\Gamma((n+p)/d)}}_{{\rm cte}(n,p,d,\phi)}\cdot\int_{\R^n}h\,\exp(-g)\,d\x.
\end{equation}
\end{lemma}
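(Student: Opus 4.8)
The plan is to pass to polar coordinates and exploit homogeneity to separate the radial and angular parts of each integral, so that both $\int\phi(g)h\,d\x$ and $\int h\exp(-g)\,d\x$ factor as a purely radial scalar times one and the \emph{same} angular integral over the unit sphere; dividing then eliminates the common angular factor. Writing $\x=r\omega$ with $r>0$ and $\omega\in S^{n-1}$, and using $d\x=r^{n-1}\,dr\,d\sigma(\omega)$ with $d\sigma$ the surface measure, homogeneity gives $g(r\omega)=r^{d}g(\omega)$ and $h(r\omega)=r^{p}h(\omega)$. Since $\phi,g,h\ge0$ the integrands are nonnegative, so Tonelli justifies every interchange and all manipulations are legitimate in $[0,+\infty]$. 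This yields
\[
\int_{\R^n}\phi(g(\x))h(\x)\,d\x=\int_{S^{n-1}}h(\omega)\left[\int_0^\infty \phi(r^{d}g(\omega))\,r^{n+p-1}\,dr\right]d\sigma(\omega).
\]

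Next I would evaluate the inner radial integral. On the set $\{\omega:g(\omega)>0\}$ (which, for the $g$ of interest with compact sublevel set, is all of $S^{n-1}$, since $g\ge c>0$ there by compactness) I substitute $t=g(\omega)^{1/d}r$. This is a positive rescaling of $r$ that preserves the orientation of $(0,\infty)$ irrespective of the sign of $d$, and it turns $r^{d}g(\omega)$ into $t^{d}$, giving $\int_0^\infty \phi(r^{d}g(\omega))\,r^{n+p-1}\,dr=g(\omega)^{-(n+p)/d}\int_0^\infty\phi(t^{d})\,t^{n+p-1}\,dt$. Substituting back, I obtain $\int_{\R^n}\phi(g)h\,d\x=\bigl[\int_0^\infty\phi(t^{d})t^{n+p-1}\,dt\bigr]\cdot J$, where $J:=\int_{S^{n-1}}h(\omega)g(\omega)^{-(n+p)/d}\,d\sigma(\omega)$ carries \emph{all} the angular information and is independent of $\phi$. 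Running the identical computation with $\phi=\exp$ gives $\int_{\R^n}h\exp(-g)\,d\x=\bigl[\int_0^\infty\exp(-t^{d})t^{n+p-1}\,dt\bigr]\cdot J$, with the \emph{same} factor $J$.

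Because the first scalar prefactor is finite by hypothesis and the second is finite and strictly positive (after the substitution $u=t^{d}$ one gets $\int_0^\infty\exp(-t^{d})t^{n+p-1}\,dt=\Gamma((n+p)/d)/|d|$), finiteness of either integral is equivalent to finiteness of $J$; this is exactly the claimed \emph{if and only if}. On the degenerate angular set where $g(\omega)=0$ the radial integral of $\phi(0)r^{n+p-1}$ and the factor $g(\omega)^{-(n+p)/d}$ both blow up, so both sides are simultaneously $+\infty$ and the equivalence persists. Dividing the two displayed factorizations cancels $J$ and produces
\[
\int_{\R^n}\phi(g)h\,d\x=\frac{\int_0^\infty\phi(t^{d})t^{n+p-1}\,dt}{\Gamma((n+p)/d)/|d|}\int_{\R^n}h\exp(-g)\,d\x,
\]
which for $d>0$ is precisely (\ref{expo}), since then $|d|=d$.

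The step I expect to demand the most care is the evaluation $\int_0^\infty\exp(-t^{d})t^{n+p-1}\,dt=\Gamma((n+p)/d)/|d|$: when $d<0$ the change of variables $u=t^{d}$ reverses the orientation of $(0,\infty)$, so one must track the resulting sign to recover a \emph{positive} multiple of $\Gamma((n+p)/d)$ (this is also what forces the factor $|d|$ rather than $d$ in the constant, reconciling it with the manifestly nonnegative left-hand side). A secondary technical point is the treatment of the angular integral $J$ and of the degenerate rays $g(\omega)=0$, which is where any divergence actually resides; everything else reduces to the routine Tonelli interchange and the two one-dimensional substitutions above.
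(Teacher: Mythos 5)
Your argument is, at its core, the same as the paper's. The paper parametrizes $\R^n$ by $x_1=\pm t$, $x_j=tz_{j-1}$ instead of by spherical coordinates, but both proofs rest on the same radial--angular factorization $\int\phi(g)h\,d\x=\bigl(\int_0^\infty t^{n+p-1}\phi(t^d)\,dt\bigr)\cdot J$, where the angular factor $J$ (the paper's $A(g,h)$ in (\ref{A}); your integral over the sphere) does not depend on $\phi$, followed by the specialization $\phi=\exp(-\cdot)$ to identify $J$ with $\int h\exp(-g)\,d\x$ up to the Gamma constant, with Tonelli legitimizing the manipulations since all integrands are nonnegative. Your observation that the constant must carry $|d|$ rather than $d$ when $d<0$ is correct and is in fact a small repair of the paper: the orientation reversal in $u=t^d$ is ignored in (\ref{AA}), so the constant as printed is right only for $d>0$ (admittedly the only case that ever occurs later, since a real-valued continuous PHF of negative degree on all of $\R^n$ is necessarily trivial).

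The one genuinely wrong step is your dismissal of the degenerate rays $\{\omega:\,g(\omega)=0\}$. You assert that there the radial integral of $\phi(0)r^{n+p-1}$ blows up, so that both sides are simultaneously $+\infty$; but this requires $\phi(0)>0$. If $\phi(0)=0$, the contribution of such rays to the left-hand side is identically zero, while their contribution to $\int h\exp(-g)\,d\x$ is infinite whenever $h>0$ on a positive-measure set of them, and then the stated equivalence genuinely fails. Concretely: take $n=2$, $d=1$, $h\equiv1$, $\phi=1_{[1,2]}$ (so $\phi(0)=0$ and $\int_0^\infty t\,\phi(t)\,dt=\tfrac32<\infty$), and $g(\x)=\Vert\x\Vert\,\rho(\theta)$ in polar coordinates, with $\rho(\theta)=(\theta(\pi-\theta))^{1/4}$ for $\theta\in[0,\pi]$ and $\rho\equiv0$ for $\theta\in[-\pi,0]$; this $g$ is a continuous nonnegative PHF of degree $1$, yet $\int\phi(g)\,d\x=\tfrac32\int_0^\pi(\theta(\pi-\theta))^{-1/2}\,d\theta<\infty$ while $\int\exp(-g)\,d\x=+\infty$. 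So in this degenerate situation the lemma as stated is itself false and no proof can be completed; the honest fix is to assume that $g$ vanishes on at most a measure-zero set of directions. Note that the paper's own proof makes the same silent assumption (its substitution $u=t\,g(1,\z)^{1/d}$ presupposes $g(1,\z)>0$ for a.e.\ $\z$), and the assumption costs nothing in the sequel, since every application has $g\in C_d$, whose bounded sublevel set forces $g>0$ on $\R^n\setminus\{0\}$ --- exactly the observation you make at the start of your second paragraph. A last, smaller, caveat of the same kind: the ``only if'' direction also requires $\int_0^\infty t^{n+p-1}\phi(t^d)\,dt>0$ (otherwise the left-hand side is finite for trivial reasons, whatever $g$ is); this too is shared with the paper's proof.
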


\begin{proof}
With $\z=(z_1,\ldots,z_{n-1})$, do the change of variable $x_1=t$, $x_2=tz_1,\dots,x_n=tz_{n-1})$ so that one may 
decompose $\int_{\R^n}\phi(g(\x))h(\x)d\x$ into the sum
\begin{eqnarray*}
&&\int_{\R_+\times\R^{n-1}}t^{n+p-1}\phi(t^dg(1,\z))h(1,\z)
\,dt\,d\z\\
&+&\int_{\R_+\times\R^{n-1}} t^{n+p-1}\phi(t^dg(-1,-\z))h(-1,-\z)
\,dt\,d\z,\\
&=&\int_{\R^{n-1}}\left(\int_0^\infty t^{n+p-1}\phi(t^dg(1,\z))\,dt\right)h(1,\z)\,d\z\\
&+&\int_{\R^{n-1}}\left(\int_0^\infty t^{n+p-1}\phi(t^dg(-1,-\z))\,dt\right)h(-1,-\z)\,d\z,\\
\end{eqnarray*}
where the last two integrals are obtained from the sum of the previous two by using Tonelli's Theorem
(since $\phi$ and $h$ are nonnegative functions) and whether or not the integrals are finite; see e.g. Dunford and Schwartz
\cite[Theorem 14, p. 194]{dunford}.

Next, with the change of variable $u=t\,g(1,\z)^{1/d}$ and $u=t\,g(-1,-\z)^{1/d}$
\begin{equation}
\label{aux77}
\int_{\R^n} \phi(g(\x))\,h(\x)d\x\,=\,\left(\int_{\R_+} u^{n+p-1}\phi(u^d)\,du\right)\cdot A(g,h),\end{equation}
with
\begin{equation}
\label{A}
A(g,h)=\int_{\R^{n-1}}\left(\frac{h(1,\z)}{g(1,\z)^{(n+p)/d}}+
\frac{h(-1,-\z)}{g(-1,-\z)^{(n+p)/d}}\right)\,d\z.\end{equation}
In particular, the choice $t\mapsto \phi(t)=\exp(-t)$ yields:
\begin{equation}
\label{AA}
A(g,h)\,=\,\frac{d}{\Gamma((n+p)/d)}\int_{\R^n} h\exp(-g)\,d\x,\end{equation}
and so $A(g,h)$ is finite if and ony if $\int h\exp(-g)d\x$ is finite. 
Substituting $A(g,h)$ in (\ref{aux77}) with (\ref{AA}) yields (\ref{expo}).
\end{proof}

\subsection{Integration on sublevel sets  and non Gaussian integrals}

As a consequence of Lemma \ref{lemma0} we obtain the following result.
\begin{thm}
\label{thmain}
Let $g,h$ be PHFs of degree $0 \neq d\in\R$ and $p\in\R$ respectively,
and such that $g\in C_d$. Then for every $y\in [0,\infty)$:
\begin{equation}
\label{b1}
\vol(\{\x\,:\,g(\x)\leq y\})\,=\,\frac{y^{n/d}}{\Gamma(1+n/d)}\int_{\R^n}\exp(-g)\,d\x,
\end{equation}
and
\begin{equation}
\label{b2}
\int_{\{\x\,:\,g(\x)\leq y\}}h\,d\x\,=\,\frac{y^{(n+p)/d}}{\Gamma(1+(n+p)/d)}\int_{\R^n}h\,\exp(-g)\,d\x,
\end{equation}
whenever $\int_{\R^n}\vert h\vert \exp(-g)d\x$ (or $\int_{\{\x\,:\,g(\x)\leq 1\}}\vert h\vert \,d\x$) is finite.
And (\ref{b2}) also holds if 
$h$ is replaced with $\vert h\vert$, or with $\max[0,h]$, or with $\max[0,-h]$.
\end{thm}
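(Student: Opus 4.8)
The plan is to read off both identities from Lemma~\ref{lemma0} by specializing $\phi$ to an indicator function, and then to remove the nonnegativity assumption on $h$ by a positive/negative decomposition. First I would record the elementary but decisive observation that, since $g\in C_d$ is nonnegative, for every $y\in[0,\infty)$ we have $1_{\{g(\x)\le y\}}=\phi_y(g(\x))$ with $\phi_y:=1_{[0,y]}$, so that
\[\int_{\{\x\,:\,g(\x)\le y\}}h\,d\x\,=\,\int_{\R^n}\phi_y(g(\x))\,h(\x)\,d\x.\]
The map $\phi_y$ is measurable and nonnegative, so whenever $h$ is a nonnegative PHF of degree $p$ the right-hand side is exactly the object covered by Lemma~\ref{lemma0}. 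This already disposes of the final assertion, because each of $|h|$, $\max[0,h]$ and $\max[0,-h]$ is readily checked to be a nonnegative PHF of degree $p$ (for $\lambda>0$, $\max[0,h(\lambda\x)]=\max[0,\lambda^p h(\x)]=\lambda^p\max[0,h(\x)]$, and similarly for the others). What remains is to evaluate the constant ${\rm cte}(n,p,d,\phi_y)$ appearing in (\ref{expo}).

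Second, I would compute that constant. The hypothesis $g\in C_d$ forces $d>0$ (for $d<0$ a nonzero nonnegative PHF has $g(\lambda\x_0)=\lambda^d g(\x_0)\to0$ along any ray where $g(\x_0)>0$, so $\lambda\x_0$ escapes to infinity inside the sublevel set, contradicting boundedness). With $d>0$ one has $\phi_y(t^d)=1_{[0,\,y^{1/d}]}(t)$, hence $\int_0^\infty t^{n+p-1}\phi_y(t^d)\,dt=\int_0^{y^{1/d}}t^{n+p-1}\,dt=y^{(n+p)/d}/(n+p)$, the finiteness of which (and of the Gamma factors) forces the exponent $(n+p)/d$ to be positive. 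Using the functional equation $a\,\Gamma(a)=\Gamma(1+a)$ with $a=(n+p)/d$,
\[{\rm cte}(n,p,d,\phi_y)\,=\,\frac{d\,y^{(n+p)/d}/(n+p)}{\Gamma((n+p)/d)}\,=\,\frac{y^{(n+p)/d}}{\tfrac{n+p}{d}\,\Gamma((n+p)/d)}\,=\,\frac{y^{(n+p)/d}}{\Gamma(1+(n+p)/d)}.\]
Substituting into (\ref{expo}) gives (\ref{b2}) for nonnegative $h$, and the special case $h\equiv1$, $p=0$ yields (\ref{b1}). The equivalence of the two integrability hypotheses is nothing but the finiteness dichotomy of Lemma~\ref{lemma0} applied to $|h|$ with $\phi=1_{[0,1]}$, which identifies $\int_{\{g\le1\}}|h|\,d\x$ with a constant multiple of $\int_{\R^n}|h|\exp(-g)\,d\x$.

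Third, I would pass to a general signed $h$ by writing $h=\max[0,h]-\max[0,-h]$. Both summands are nonnegative PHFs of degree $p$ dominated by $|h|$, so the standing hypothesis $\int_{\R^n}|h|\exp(-g)\,d\x<\infty$ guarantees that (\ref{b2}) holds for each of them and, applying it to $|h|$, that $\int_{\{g\le y\}}|h|\,d\x<\infty$ for every $y$, whence $h$ is Lebesgue integrable on each sublevel set. Subtracting the two identities and invoking linearity of the integral produces (\ref{b2}) for $h$ itself.

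The only genuine subtlety, and the step I would handle most carefully, is the bookkeeping that ties the sign of $d$ and the positivity of $(n+p)/d$ to the stated finiteness conditions: one must verify that $g\in C_d$ really excludes $d\le0$ and that the hypothesis on $h$ forces $n+p>0$, so that the improper integral $\int_0^{y^{1/d}}t^{n+p-1}\,dt$ and all the Gamma values are finite and the cancellation in $a\,\Gamma(a)=\Gamma(1+a)$ is legitimate. Everything else is a direct specialization of (\ref{expo}) together with this elementary Gamma identity.
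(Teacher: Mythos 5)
Your proof is correct and takes essentially the same route as the paper: both arguments specialize Lemma \ref{lemma0} to an indicator choice of $\phi$ and then pass to signed $h$ via the decomposition $h=\max[0,h]-\max[0,-h]$ with domination by $\vert h\vert$. The only cosmetic difference is that you apply the lemma once with $\phi=1_{[0,y]}$ and evaluate ${\rm cte}(n,p,d,\phi)$ directly for each $y$, whereas the paper works at $y=1$ with $\phi=1_{[0,1]}$, compares against $\phi(t)=\exp(-t)$ through the common factor $A(g,\vert h\vert)$, and then rescales to general $y$ by homogeneity.
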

\begin{proof}
Write $h=h^+-h^-$ with $h^+(\x):=\max[0,h(\x)]$, $h^-(\x):=\max[0,-h(\x)]$ for all $\x$.
Observe that $\vert h\vert$, $h^+$ and $h^-$ are nonnegative and positively homogeneous of degree $p$, and
$\vert h\vert=h^++h^-$, $h=h^+-h^-$.

With $A$ as in (\ref{aux77}), using  Lemma \ref{lemma0} with $\phi(t)=\exp(-t)$, yields 
\begin{eqnarray*}
\int_{\R^n}\vert h\vert\,\exp(-g)\,d\x&=&\frac{\Gamma(1+(n+p)/d)}{n+p}\cdot A(g,\vert h\vert)\\
\end{eqnarray*}
where all integrals are finite since the right-hand-side is finite. Similarly, using  Lemma \ref{lemma0} with $\phi(t)=1_{[0,1]}(t)$, yields
\[\int_{\{\x:g(\x)\leq1\}}\vert h\vert \,d\x\,=\,\frac{1}{n+p}\cdot A(g,\vert h\vert),\]
and therefore,
\[\int_{\{\x\,:\,g(\x)\leq 1\}}\vert h\vert \,d\x\,=\,\frac{1}{\Gamma(1+(n+p)/d)}\int_{\R^n}\vert h\vert \,\exp(-g)\,d\x.\]
Next, by homogeneity,
\[\int_{\{\x\,:\,g(\x)\leq y\}}\vert h\vert \,d\x\,=\,y^{(n+p)/d}\int_{\{\x\,:\,g(\x)\leq 1\}}\vert h\vert \,d\x,\]
and so
\begin{equation}
\label{aux66}
\int_{\{\x\,:\,g(\x)\leq y\}}\vert h\vert \,d\x\,=\,\frac{y^{(n+p)/d}}{\Gamma(1+(n+p)/d)}\int_{\R^n}\vert h\vert \,\exp(-g)\,d\x.
\end{equation}
Obviously, with same arguments, (\ref{aux66}) also holds if we replace $\vert h\vert$ with $h^+$ and $h^-$
and finiteness follows because 
\[0\leq\,\int_{\{\x\,:\,g(\x)\leq y\}} h^+ \,d\x\,\leq\,\int_{\{\x\,:\,g(\x)\leq y\}}\vert h\vert \,d\x,\]
and similarly for $h^-$. Hence (\ref{aux66}) with $h$ in lieu of $\vert h\vert$ (i.e. (\ref{b2}))
follows by additivity since $h=h^+-h^-$.

Finally, from what precedes, finiteness of
$\int_{\{\x:g(\x)\leq 1\}}\vert h\vert d\x$ is equivalent to finiteness of
$\int_{\R^n}\vert h\vert \exp(-g)d\x$.
\end{proof}

When $y=1$, $g$ is a positive definite form with $d\in\N$, and $h$ is identical to the constant function $1$,
(\ref{b1}) is already proved in Morosov and Shakirov \cite[(75) p. 39]{morosov1}.

In particular, when $g$ is the quadratic form 
$\x\mapsto g_2(\x):=\frac{1}{2}\x^T\Q\x$ for some real symmetric positive definite matrix $\Q$, one retrieves that
the volume of the ellipsoid $\xi(y):=\{\x\,:\,g_2(\x)\leq y\}$ is simply related to the determinant of
$\Q$ by the formula
\begin{equation}
\label{det}
\vol(\xi(y))\,=\,\frac{y^{n/2}}{\Gamma(1+n/2)}\underbrace{\dis\int_{\R^n}\exp(-\frac{1}{2}\x^T\Q\x)\,d\x}_{=(2\pi)^{n/2}/\sqrt{\det\Q}}
\end{equation}
So Theorem \ref{thmain} states that the volume of the sublevel set is simply related to
the integral of $\exp(-g(\x))$ over the entire domain $\R^n$ which happens to be simply related to
the determinant of $\Q$ when $g$ is the quadratic form $\x^T \Q\x$. 
One goal of the theory of integral discriminants is precisely to express 
$\int \exp(-g)$ in terms of invariants of $g$ when $g$ is a form. See e.g. Morosov and Shakirov \cite{morosov1,morosov2}
and Shakirov \cite{shakirov}.

\subsection*{Intersection of sublevel sets}

Suppose that with $h$ being positively homogeneous of degree $p$,
one wishes to compute the integral 
$\int_{\Omega}h(\x)\,d\x$ where 
\[\Omega:=\{\x\in\R^n\::\: g_k(\x)\leq z_k,\quad k=1,\ldots ,m\},\]
for some $m$ PHFs $g_1,\ldots,g_m$ of degree $0\neq d\in\R$, and some (strictly) positive vector
$\z\in\R^m$. Equivalently,
\[\Omega:=\{\x\in\R^n\::\: \tilde{g}_k(\x;\z)\leq 1,\quad k=1,\ldots ,m\},\]
for the functions $\x\mapsto \tilde{g}_k(\x;\z):=g_k(z_k^{-1/d}\x)$, $k=1,\ldots,m$,
which are also PHFs of degree $d\in\R$. Hence with no loss of generality,
one may restrict to sets of the form
\begin{equation}
\label{newset}
\Omega(y)\,:=\,\{\x\::\:g_k(\x)\,\leq\,y,\quad k=1,\ldots,m\}\end{equation}
for some positive scalar $y\in\R$, and 
PHFs $g_1,\ldots,g_m$ of same degree $d\in\R$.

Notice that 
$\x\mapsto \psi(\x):=\max[g_1(\x),\ldots,g_m(\x)]$
is a PHF of degree $d$.

\begin{corollary}
\label{coro4}
Let $h$ be a PHF of degree $p\in\R$, let
$g_1,\ldots,g_m$ be PHFs of degree $0\neq d\in\R$, and
assume that the set $\{\x\,:\, g_k(\x)\leq 1,\: k=1,\ldots,m\}$ is bounded and for every $y>0$, let
$\Omega(y)$ be as in (\ref{newset}). Then:
\begin{equation}
\label{coro4-2}
\vol (\Omega(y))\,=\,\frac{y^{n/d}}{\Gamma(1+n/d)}\int_{\R^n}\exp(-\psi)\,d\x,
\end{equation}
and
\begin{equation}
\label{coro4-1}
\int_{\Omega(y)}h(\x)\,d\x\,=\,\frac{y^{(n+p)/d}}{\Gamma(1+(n+p)/d)}
\int_{\R^n}h\,\exp(-\psi)\,d\x,
\end{equation}
whenever $\int_{\R^n}\vert h\vert \,\exp(-\psi)\,d\x$.
\end{corollary}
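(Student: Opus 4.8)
The plan is to recognize that the seemingly more general intersection set $\Omega(y)$ is in fact a single sublevel set, so that the corollary reduces immediately to Theorem~\ref{thmain}. The key observation, already flagged in the text preceding the statement, is that for the function $\x\mapsto\psi(\x):=\max[g_1(\x),\ldots,g_m(\x)]$ one has the set identity
\[\Omega(y)=\{\x:g_k(\x)\leq y,\ k=1,\ldots,m\}=\{\x:\psi(\x)\leq y\},\]
since $\max_k g_k(\x)\leq y$ holds exactly when every $g_k(\x)\leq y$. Thus the intersection of sublevel sets of the $g_k$ is a single sublevel set of $\psi$, and the problem collapses to the one already solved.

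First I would verify that $\psi$ is itself an admissible integrand for Theorem~\ref{thmain}, i.e. that $\psi\in C_d$. Positive homogeneity of degree $d$ is immediate: for $\lambda>0$, $\psi(\lambda\x)=\max_k g_k(\lambda\x)=\max_k \lambda^d g_k(\x)=\lambda^d\psi(\x)$, using $\lambda^d>0$; continuity of $\psi$ follows from continuity of the $g_k$ and of the finite maximum. Boundedness of $\{\x:\psi(\x)\leq 1\}$ is precisely the hypothesis of the corollary, since this sublevel set coincides with $\{\x:g_k(\x)\leq 1,\ k=1,\ldots,m\}$. Hence $\psi\in C_d$, and in particular $\psi$ is nonnegative as noted for every element of $C_d$.

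With these observations in place, I would simply apply Theorem~\ref{thmain} with $g$ replaced by $\psi$. The integrability hypothesis of the theorem, namely finiteness of $\int_{\R^n}|h|\exp(-g)\,d\x$, becomes finiteness of $\int_{\R^n}|h|\exp(-\psi)\,d\x$, which is exactly the condition assumed in the corollary. Equations (\ref{b1}) and (\ref{b2}) of Theorem~\ref{thmain} then read, after the substitution $g=\psi$ and the set identity above, precisely as (\ref{coro4-2}) and (\ref{coro4-1}).

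There is essentially no genuine obstacle here: the entire content is the reduction $\Omega(y)=\{\x:\psi(\x)\leq y\}$, after which the result is a direct specialization of the already-proved Theorem~\ref{thmain}. If any care is needed, it is only in confirming that the finite maximum of PHFs preserves both positive homogeneity and the (continuity plus boundedness) requirements defining $C_d$ — all routine — and in reading the corollary's trailing clause ``whenever $\int_{\R^n}|h|\exp(-\psi)\,d\x$'' as ``is finite,'' matching the integrability proviso inherited from Theorem~\ref{thmain}.
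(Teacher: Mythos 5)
Your proposal is correct and follows exactly the paper's own argument: identify $\Omega(y)$ as the sublevel set $\{\x:\psi(\x)\leq y\}$ of the PHF $\psi=\max[g_1,\ldots,g_m]$, note that boundedness of $\Omega(1)$ gives $\psi\in C_d$, and apply Theorem~\ref{thmain} with $\psi$ in place of $g$. Your additional verification that the finite maximum preserves positive homogeneity and continuity only makes explicit what the paper leaves as an assertion.
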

\begin{proof}
Notice that  $\x\mapsto \psi(\x):=\max[g_1(\x),\ldots,g_m(\x)]$ is also a PHF of degree $0\neq d\in\R$,
and $\Omega(y)=\{\x\,:\,\psi(\x)\leq y\}$. And so if $\Omega(y)$ is bounded then 
$\psi\in C_d$. Hence (\ref{coro4-1})-(\ref{coro4-2}) is just (\ref{b1})-(\ref{b2}) with
$h$ and $\psi$ in lieu of $h$ and $g$.
\end{proof}

\subsection{An alternative proof with a duality interpretation}

Next, we present an alternative proof of Theorem \ref{thmain} that uses Laplace transform techniques
and provides an interpretation of the result in an appropriate {\it duality} framework.

Suppose that $g,h\in C_d$.
Since $g$ is nonnegative, the function $I_{g,h}$ vanishes on $(-\infty,0]$. 
Its Laplace transform $\mathcal{L}_{I_{g,h}}: \mathbb{C}\to\R$ is the function
\[\lambda\mapsto \mathcal{L}_{I_{g,h}}(\lambda):=\int_0^\infty\exp(-\lambda y) I_{g,h}(y)\,dy,\]
and observe that 
\begin{eqnarray*}
\mathcal{L}_{I_{g,h}}(\lambda)&=&\int_0^\infty\exp(-\lambda y)\left(\dis\int_{\{\x: g(\x)\leq y\}}hd\x\right)\,dy\\
&=&\dis\int_{\R^n}h(\x)\left( \int_{g(\x)}^\infty \exp(-\lambda y)dy\right)\,d\x\quad\mbox{[by Fubini's Theorem]}\\
&=&\frac{1}{\lambda}\dis\int_{\R^n}\,h(\x)\exp(-\lambda g(\x))\,d\x\\
&=&
\frac{\lambda^{-p/d}}{\lambda}\dis\int_{\R^n}\,h(\lambda^{1/d}\x)\exp(-g(\lambda^{1/d}\x))\,d\x\quad\mbox{[by homogeneity]}\\
&=&\frac{1}{\lambda^{1+(n+p)/d}}\dis\int_{\R^n}h(\z)\exp(-g(\z))\,d\z\quad\mbox{[by $\lambda^{-1/d}\x\to\z$]}\\
&=&\frac{\dis\int_{\R^n}h(\z)\exp(-g(\z))\,d\z}{\Gamma(1+(n+p)/d)}\:\mathcal{L}_{y^{(n+p)/d}}(\lambda).
\end{eqnarray*}
And so, by uniqueness of 
the Laplace transform,
\[I_{g,h}(y)=\frac{y^{(n+p)/d}}{\Gamma(1+(n+p)/d)}\dis\int_{\R^n}\,h(\x)\exp(-g(\x))\,d\x,\]
which is the desired result. So the above expression of $I_{g,h}(y)$ is obtained by
``inverting" the Laplace transform $\mathcal{L}_{I_{g,h}}$ at the point $y$, 
which in fact, as we next see, is solving a ``dual" problem.

For analogy purposes, consider, the optimization problem 
\[ \rho_{g,h}(y)\,=\,\sup_\x\{ h(\x)\::\:g(\x)\leq y\},\]
where $y$ is fixed, $h$ and $-g$ are concave and $h$ is nonnegative. 
Equivalently, $\rho_{g,h}(y)=\exp(\theta_{g,h}(y))$ where $y\mapsto \theta_{g,h}(y)$ is the {\it optimal value function} of the optimization problem 
\[\P_y:\quad \theta_{g,h}(y)\,=\,\sup_\x\:\{ \ln h(\x)\::\:g(\x)\leq y\,\}.\]
Associated with $\P_y$ is the {\it dual} problem $\P^*_y:\:\inf_\lambda\,\{\G(\lambda)\::\:\lambda \geq0\}$,
where $G:\R_+\to\R$ is the function $\lambda\mapsto G(\lambda):=\sup_{\x}\:\{\,\ln h(\x)+\lambda (y-g(\x))\}$.
Observe that:
\begin{eqnarray*}
\lambda\mapsto G(\lambda)&=&\lambda y+\sup_{\x}\:\{\,\ln h(\x)-\lambda g(\x)\}\\
&=&\lambda y+\sup_{\z}\:\{\,\ln (\lambda^{-p/d}h(\z))-g(\z)\}\quad\mbox{[via $\z=\lambda^{1/d}\x$]}\\
&=&\lambda y+-\frac{p}{d}\ln \lambda +\sup_{\z}\{\ln h(\z)-g(\z)\}.
\end{eqnarray*}
And so the dual problem $\P^*_y$ reads
\begin{eqnarray*}
\P^*_y:\quad\gamma&=&\sup_{\z}\{\ln h(\z)-g(\z)\}+\inf_{\lambda\geq0}\:\{\lambda y-\frac{p}{d}\ln \lambda\,\}\\
&=&\ln y^{p/d} +\ln\left(\sup_{\z}\{h(\z)\exp(-g(\z))\}\right)+\frac{p}{d}(1-\ln \frac{p}{d})
\end{eqnarray*}
 In particular, if $h$ is log-concave and $g$ is convex, by a standard argument of convex optimization,
 $\gamma=\theta_{g,h}(y)\,(=\ln\rho_{g,h}(y))$. And therefore,
 \[\rho_{g,h}(y)\,=\,\exp(\theta_{g,h}(y))\,=\,y^{p/d}\, \frac{\exp(p/d)}{(p/d)^{p/d}}\,\sup_\x \{\,h(\x)\exp(-g(\x))\,\} \]
to compare with
\[I_{g,h}(y)=y^{(n+p)/d}\,\frac{1}{\Gamma(1+(n+p)/d)}\,\dis\int_{\R^n}\,h(\x)\exp(-g(\x))\,d\x.\]

Alternatively, the Legendre-Fenchel transform of $\theta_{g,h}(y)$ (for the concave version) is the function
\begin{eqnarray*}
\theta_{g,h}^*(\lambda)&=&\inf_y \{\lambda y-\theta_{g,h}(y)\}\\
&=&\inf_\lambda \{\lambda y-\sup_x \{\ln h(\x)\::\:g(\x)\leq y\}\,\}\\
&=&\inf_\x\,\{\lambda g(\x)-\ln h(\x)\}\,=\,-\ln\left(\sup_\x\{h(\x)\,\exp(-\lambda g(\x))\}\right)
\end{eqnarray*}
and so when $h$ is log-concave and $g$ is convex, $\theta_{g,h}(y)=(\theta^*_{g,h})^*(y)$, so that
\begin{eqnarray*}
\theta_{g,h}(y)
&=&\inf_\lambda \{\lambda y-\theta_{g,h}^*(\lambda)\}\\
&=&\inf_\lambda\,\{\lambda y+\ln\left(\sup_\x\{ h(\x)\,\exp(-\lambda g(\x))\}\right)\,\}\\
&=&\ln y^{p/d} +\ln\left(\sup_{\z}\{h(\z)\exp(-g(\z))\}\right)+\frac{p}{d}(1-\ln \frac{p}{d}).
\end{eqnarray*}

In summary, to the Laplace transform step 
\begin{equation}
\label{laplace}
\mathcal{L}_{I_{g,h}}(\lambda)\,=\,\int_{\R^n} \exp(-\lambda y)I_{g,h}(y)\,dy\end{equation}
in the usual $(\cdot,+)$-algebra, corresponds to the Legendre Fenchel transform 
\begin{eqnarray*}
\theta_{g,h}^*(\lambda)=\inf_y \{\lambda y-\theta_{g,h}(y)\}&=&-\sup_y \{-\lambda y+\theta_{g,h}(y)\}\\
&=&-\ln\sup_y \{\exp(-\lambda y)\rho_{g,h}(y)\},\end{eqnarray*}
where to emphasize the analogy, the latter term can be written 
\[\ln\left( ``\int" \exp(-\lambda y) \,\rho_{g,h}(y)\,\right),\]
i.e., the $``\sup"$ operator is integration $``\int"$  in the $(\max,+)$-algebra. And with this convention
\[\exp(\theta_{g,h}^*(\lambda))\,\simeq\,``\int"\exp(-\lambda y)\,\rho_{g,h}(y).\]
(Compare with (\ref{laplace}).)
Similarly, the Laplace inverse transform step 
\begin{equation}
\label{laplaceinv}
I_{g,h}(y)=\int_{\omega-i\infty}^{\omega+i\infty} \exp(\lambda y)\,\mathcal{L}_{I_{g,h}}(\lambda)\,d\lambda\end{equation}
(where $\omega\in\mathbb{C}$ is on the right of all singularities of $\mathcal{L}_{I_{g,h}}$)
and which yields
\[I_{g,h}(y)=\frac{y^{(n+p)/d}}{\Gamma(1+(n+p)/d)}\int_{\R^n}h\exp(-g)d\x,\]
is solving the ``dual" and corresponds to the Legendre-Fenchel transform (which is involutive) applied to $\theta_{g,h}^*$
\[\exp(\theta_{g,h}(y))\,=\,\exp(\inf_\lambda\{\lambda y-\theta_{g,h}^*(\lambda)\})\,\simeq\,-``\int"\exp(-\lambda y)\,\exp(\theta_{g,h}^*(\lambda))\,d\lambda\]
(compare with (\ref{laplaceinv})), and which yields 
\[\exp(\theta_{g,h}(y))=y^{p/d}\, \frac{\exp(p/d)}{(p/d)^{p/d}}\:``\int" \,h(\x)\exp(-g(\x)).\]
A rigorous analysis of the links between integration and linear optimization on a polytope
has been already investigated in \cite{lasserrebook}.

\subsection{Approximating non Gaussian integrals}

As we have already mentioned, in some cases the non Gaussian integral 
can be computed explicitly in terms of some on algebraic invariants of $g$; see e.g.
Morosov and Shakirov \cite{morosov1} and Shakirov \cite{shakirov}.
But so far there is no general formula and therefore an alternative is
to seak for a numerical scheme for its evaluation, or at least, its approximation. 

For this purpose, we next show that Theorem \ref{thmain} is helpful as it provides a means to compute any moment of the measure $d\mu=\exp(-g)d\x$ on $\R^n$ by
computing the same moment but now of the Lebesgue measure on the sublevel set $\{\x: g(\x)\leq 1\}$. Indeed, for every
 $\alpha\in\N^n$, letting $\x\mapsto h(\x):=\x^\alpha$ in Theorem \ref{thmain}, yields
\begin{equation}
\label{moment}
\int_{\R^n} \x^\alpha\exp(-g(\x))\,d\x\,=\,\Gamma(1+(n+\vert\alpha\vert)/d)\int_{\{\x\,:\,g(\x)\leq1\}}\x^\alpha\,d\x,
\end{equation}
where $\vert\alpha\vert=\sum_i\alpha_i$. Have we made any progress with this equivalence?

The answer is yes. If $g$ is a (non necessarily homogeneous) polynomial, it turns out that every moment of the Lebesgue measure on the sublevel set
$\{\x\,:\,g(\x)\leq1\}$ can be approximated as closely as desired by solving a hierarchy of semidefinite programs\footnote{A semidefinite program is a finite-dimensional convex conic optimization problem, that up to arbitrary (fixed) precision,
can be solved efficiently, i.e., in time polynomial in the input size of the problem. For more details the interested reader is referred to e.g. \cite{wolko}.}
as described in Henrion et al. \cite{sirev}.
In fact, for every $\alpha\in\N^n$ fixed, the moment
\[z_\alpha\,:=\,\int_{\{\x\,:\,g(\x)\leq1\}}\x^\alpha\,d\x\]
can be approximated to arbitrary precision $\epsilon>0$ fixed in advance,
by solving two sequences of semidefinite programs, one which provides
a monotone non decreasing sequence of upper bounds $u_k$, $k\in\N$, while the other provides
a monotone non increasing sequence of lower bounds $\ell_k$, $k\in\N$. The procedure stops whenever 
$u_k-\ell_k<\epsilon$, in which case one may set 
\[z_\alpha\approx \tilde{z}_\alpha\,:=\,(u_k+\ell_k)/2.\]
This requires to solve two sequences of semidefinite programs for each $\alpha\in\N^n$. 
In fact, if one is ready to relax the monotonicity property of the upper and lower bounds $\{u_k,\ell_k\}$, it is enough to solve a single sequence
of semidefinite programs, e.g., the one defined to approximate the mass $z_0$. 
Then if $d\in\N$ and $\epsilon >0$ are fixed, and $k$ is large enough,
 not only $\vert u_k-z_0\vert<\epsilon$ but also from the solution of the semidefinite program at step $k$ one obtains
 scalars $\tilde{z}_\alpha$ such that  $\vert \tilde{z}_\alpha-z_\alpha\vert<\epsilon$, for all $\alpha\in\N^n$ such that 
 $\vert\alpha\vert <d$. However, in contrast to the case of upper and lower bounds, 
 there is no simple stopping criterion to guarantee the $\epsilon$-approximation.
For more details, the interested reader is referred to Henrion et al. \cite{sirev}. And therefore,
once the $\tilde{z}_\alpha$ have been computed, using Theorem \ref{thmain}, we obtain:
\[\left\vert 
\int_{\R^n} \x^\alpha\exp(-g(\x))\,d\x\,-\,\tilde{z}_\alpha\,\Gamma(1+(n+\vert\alpha\vert)/d)
\right\vert\,<\,\epsilon,\quad\forall\alpha\in\N^n_d,\]
which provides an $\epsilon$-approximation guarantee for the non Gaussian integral $\int_{\R^n}\exp(-g)d\x$ (and more generally for
the integral $\int_{\R^n} h\exp(-g)d\x$ whenever $h$ is any polynomial).

\subsection{Sensitivity analysis and convexity}

Recall that when $d\in\N$, $\P[\x]_{d}\subset C_d$ is the convex cone of nonnegative and 
homogeneous polynomials of degree $d$, with compact sublevel set
$\{\x\,:\,g(\x)\leq 1\}$.
Formula (\ref{b2}) of Theorem \ref{thmain} allows us
to provide insights into the function $f:C_d\to\R$, defined by:
\begin{equation}
\label{def-f}
g\mapsto f_h(g)\,:=\,\int_{\{\x\,:\,g(\x)\leq 1\}} h(\x)\,d\x,\qquad g\in\,C_d,
\end{equation}
where $h$ is a PHF. 
In particular when one wishes to 
see how $f_h$ changes when some coefficient of $g\in \P[\x]_d$ varies.
Notice that the restriction of $f_h$ to $\P[\x]_d$
may be seen as a function $f_h:\R^{\ell(d)}\to\R$ of the coefficient vector
of the polynomial $g\in\P[\x]_d$, where $\ell(d)={n+d-1\choose d}$.

Before proceeding further we need the following result. Recall that the support $\supp\mu$
of a Borel measure on $\R^n$ is the smallest closed set $A$ such that $\mu(\R^n\setminus A)=0$.
Let $C^0_d:=\{ g\in C_d: \mbox{ $g$ is continuous}\}\subset C_d$.

\begin{lemma}
\label{lemma-aux2}
Let $\mu$ be a non trivial $\sigma$-finite Borel measure on $\R^n$ and
let $\Theta_\mu\subset\R[\x]_d$ be the convex cone of 
polynomials $g$ of degree at most $d$ such that $\int\exp(-g)d\mu <\infty$.
Then:

{\rm (a)} With $d\in\R$, the function $f:\,C_d\to\R$, with $g\mapsto f(g):=\int\exp(-g)d\mu$, is convex
(and strictly convex on $C^0_d$ if $\supp\mu=\R^n$).
 
 {\rm (b)} If $\mu(O)>0$ for some open set $O\subset\R^n$ and if $d\in\N$, then $f$ is strictly convex and twice differentiable
on ${\rm int}(\Theta_\mu)$, with:
\begin{eqnarray}
\label{aux2-1}
\frac{\partial f(g)}{\partial g_\alpha}&=&\int \x^\alpha\,\exp(-g)\,d\mu,\qquad\forall\alpha,\,\:\vert\alpha\vert\leq d.\\
\label{aux2-2}
\frac{\partial^2 f(g)}{\partial g_\alpha\partial g_\beta}&=&\int \x^{\alpha+\beta}\,\exp(-g)\,d\mu,\qquad\forall\alpha,\beta,\,\:\vert\alpha\vert,\:\vert\beta\vert\leq d.
\end{eqnarray}
\end{lemma}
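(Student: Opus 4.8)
The plan is to prove Lemma~\ref{lemma-aux2} by establishing convexity from the convexity of the exponential composed with an affine map, and then upgrading to strict convexity and differentiability under the stated nondegeneracy hypotheses. The crucial observation is that $g\mapsto\exp(-g)$, as a functional on the cone of polynomials, is a convex function of the coefficient vector $(g_\alpha)$: for a fixed point $\x$, the map $g\mapsto g(\x)=\sum_\alpha g_\alpha\x^\alpha$ is \emph{linear} in the coefficients, so $g\mapsto\exp(-g(\x))$ is the composition of the convex function $t\mapsto e^{-t}$ with a linear map, hence convex in $g$. Integrating a convex function against the nonnegative measure $\mu$ preserves convexity, which gives part~(a) immediately.

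For part~(a), I would argue as follows. Fix $g_0,g_1\in C_d$ and $\tau\in[0,1]$, set $g_\tau=(1-\tau)g_0+\tau g_1$, and use pointwise convexity
\[
\exp(-g_\tau(\x))\,\leq\,(1-\tau)\exp(-g_0(\x))+\tau\exp(-g_1(\x)),\qquad\forall\x,
\]
then integrate $d\mu$ to conclude $f(g_\tau)\leq(1-\tau)f(g_0)+\tau f(g_1)$. For \emph{strict} convexity on $C^0_d$ when $\supp\mu=\R^n$, the pointwise inequality is strict at every $\x$ with $g_0(\x)\neq g_1(\x)$, and since $g_0,g_1$ are continuous and distinct this inequality is strict on a nonempty open set; as $\mu$ charges every nonempty open set (being of full support), the integrated inequality is strict. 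This is the cleanest step and carries no real difficulty.

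For part~(b), the formulas (\ref{aux2-1})--(\ref{aux2-2}) are obtained by differentiating under the integral sign, and the main technical burden is justifying this interchange on ${\rm int}(\Theta_\mu)$. The plan is to fix $g\in{\rm int}(\Theta_\mu)$ and a direction; since $g$ is interior, there is a small ball around $g$ still inside $\Theta_\mu$, so for $g'$ near $g$ one controls $\exp(-g')$ by a fixed $\mu$-integrable dominating function. Concretely, for perturbations $g_t=g+t\x^\alpha$ with $|t|$ small, the difference quotient of $\exp(-g_t(\x))$ has derivative $-\x^\alpha\exp(-g(\x))$, and one bounds $|\x^\alpha\exp(-g_t(\x))|$ uniformly in $t$ by an integrable function built from two nearby points of $\Theta_\mu$ (using that a polynomial factor $\x^\alpha$ is absorbed by shrinking the exponent slightly, since $g\in{\rm int}(\Theta_\mu)$ means $g-\epsilon\,\x^\alpha$ or a slightly scaled-down exponent is still in $\Theta_\mu$). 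The dominated convergence theorem then yields (\ref{aux2-1}), and iterating the argument yields the Hessian (\ref{aux2-2}). Strict convexity follows because the Hessian is the Gram (moment) matrix $\bigl(\int\x^{\alpha+\beta}\exp(-g)\,d\mu\bigr)_{\alpha,\beta}$, which is positive definite: a vanishing quadratic form $\int(\sum_\alpha c_\alpha\x^\alpha)^2\exp(-g)\,d\mu=0$ would force the polynomial $\sum_\alpha c_\alpha\x^\alpha$ to vanish $\mu$-almost everywhere, and since $\mu(O)>0$ for some open $O$, a nonzero polynomial cannot vanish on a set of positive measure, forcing $c=0$.

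The step I expect to be the main obstacle is the domination required for differentiating under the integral: the factor $\x^\alpha$ grows polynomially and must be controlled uniformly over a neighborhood of $g$ by a single $\mu$-integrable function. The key device is that membership in ${\rm int}(\Theta_\mu)$ gives room to trade a polynomial factor for an arbitrarily small decrease in the exponent, so that $\x^\alpha\exp(-g'(\x))$ stays bounded by $\exp(-g''(\x))$ for some $g''\in\Theta_\mu$ on the relevant neighborhood; making this trade precise (in particular verifying that the dominating exponent remains in $\Theta_\mu$) is where the interiority hypothesis is genuinely used.
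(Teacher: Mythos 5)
Your proposal is correct, and part (a) is the paper's argument verbatim: pointwise convexity of $t\mapsto e^{-t}$ integrated against $\mu$, with strictness from continuity plus full support (the paper phrases it via the closed coincidence set $\Delta=\{\x: g(\x)=q(\x)\}$, you via its open complement; same argument). Where you genuinely diverge is the justification of differentiation under the integral in part (b). The paper exploits convexity of $t\mapsto \exp(-t\x^\alpha)$: the difference quotients $\psi(t,\x)=(\exp(-t\x^\alpha)-1)/t$ are nondecreasing in $t$, so the one-sided directional derivatives $f'(g;\pm e_\alpha)$ are computed by the Extended Monotone Convergence Theorem, which needs only integrability of $\exp(-g)\psi(t_0,\cdot)$ for a single small $t_0$ (i.e.\ $g+t_0e_\alpha\in\Theta_\mu$, which is what interiority provides), and then $f'(g;-e_\alpha)=-f'(g;e_\alpha)$ upgrades one-sided to genuine partial derivatives. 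You instead use dominated convergence with an explicit dominating function, absorbing the polynomial factor into a perturbed exponent via $|u|\leq\delta^{-1}e^{\delta|u|}$, so that for $|t|\leq\epsilon/2$ one gets
\[
|\x^\alpha|\exp(-g_t(\x))\,\leq\,\tfrac{2}{\epsilon}\left[\exp\bigl(-(g-\epsilon\x^\alpha)(\x)\bigr)+\exp\bigl(-(g+\epsilon\x^\alpha)(\x)\bigr)\right],
\]
with $g\pm\epsilon\x^\alpha\in\Theta_\mu$ by interiority; this is sound and buys a more standard, self-contained interchange argument, at the cost of the domination trick the paper's monotonicity route avoids. Both approaches then finish identically: the Hessian is the $2d$-moment matrix of $d\nu=\exp(-g)\,d\mu$, and its positive definiteness gives strict convexity. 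Two small remarks. First, your derivative $-\x^\alpha\exp(-g)$ carries the correct sign; the displayed formula (\ref{aux2-1}) in the statement is missing this minus sign (the paper's own proof ends with it), so you have in fact proved the corrected version. Second, your closing step \emph{``a nonzero polynomial cannot vanish on a set of positive measure''} is true for Lebesgue measure but not for a general Borel measure $\mu$: the hypothesis $\mu(O)>0$ alone does not preclude $\mu$ restricted to $O$ from concentrating on the zero set of a polynomial (e.g.\ a surface measure). The paper's proof makes exactly the same leap, so this is a shared imprecision rather than a defect of your route, but it is worth being aware that positive definiteness really requires $\nu$ not to be carried by any algebraic hypersurface.
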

\begin{proof}
(a) Observe that $f$ is nonnegative. 
Let $\alpha\in [0,1]$ and let $g,q\in C_d$.
To prove $f(\alpha g+(1-\alpha) q)\leq\alpha f(g)+(1-\alpha)f(q)$, 
we only need consider the case where $f(g),f(q)<+\infty$,  for which we have
\[f(\alpha g+(1-\alpha)q)\,=\,\int \exp(-\alpha g-(1-\alpha)q)\,d\mu.\]
By convexity of $u\mapsto \exp(-u)$,
\begin{eqnarray*}
f(\alpha g+(1-\alpha)q)&\leq&\dis\int [\,\alpha\exp(-g)+(1-\alpha)\exp(-q)\,]\,d\mu\\
&=&\alpha f(g)+(1-\alpha)f(q),
\end{eqnarray*}
and so $f$ is convex. Now, in view of the strict convexity of $u\mapsto \exp(-u)$, 
equality may occur only if $g(\x)=q(\x)$, $\mu$-almost everywhere. If $g,q\in C^0_d$,
the set $\Delta:=\{\x :g(\x)-q(\x)=0\}$ is closed and so if $\Delta\neq\R^n$ then $\mu(\Delta)<\mu(\R^n)$ because
$\supp\mu=\R^n$. Therefore, equality occurs only if $g=q$ so that $f$ is strictly convex on $C^0_d$.\\

(b) Next, if $d\in\N$ and $g\in{\rm int}(\Theta_\mu)$, write $g$ in the canonical basis as
 $g(\x)=\sum_\alpha g_\alpha\x^\alpha$. For every
$\alpha\in\N^n_d$, let $e_\alpha=(e_\alpha(\beta))\in\R^{s(d)}$ be such that
$e_\alpha(\beta)=\delta_{\beta=\alpha}$ (with $\delta$ being the Kronecker symbol).
 Then for every $t\geq0$,
\[\frac{f(g+te_\alpha)-f(g)}{t}\,=\,\int \exp(-g)\,\left(\underbrace{\frac{\exp(-t\x^\alpha)-1}{t}}_{\psi(t,\x)}\right)\,d\mu(\x)\]
Notice that for every $\x$, by convexity of the function $t\mapsto \exp(-t\x^\alpha)$, 
\[\lim_{t\downarrow0}\psi(t,\x)\,=\,\inf_{t\geq0}\psi(t,\x)\,=\,\exp(-t\x^\alpha)'_{\vert t=0}\,=\,-\x^\alpha,\]
because for every $\x$, the function $t\mapsto \psi(t,\x)$ is nondecreasing; see e.g. Rockafellar \cite[Theorem 23.1]{rockafellar}.
Hence, the one-sided directional derivative $f'(g;\e_\alpha)$ in the direction $e_\alpha$ satisfies
\begin{eqnarray*}
f'(g;e_\alpha)&=&\lim_{t\downarrow 0}\frac{f(g+te_\alpha)-f(g)}{t}\,=\,
\lim_{t\downarrow 0}\int \exp(-g)\,\psi(t,\x)\,d\mu(\x)\\
&=&\int \exp(-g)\,\lim_{t\downarrow 0}\psi(t,\x)\,d\mu(\x)\,=\,\int-\x^\alpha \exp(-g)\,d\mu(\x),
\end{eqnarray*}
where the third equality follows from the Extended Monotone Convergence Theorem \cite[1.6.7]{ash}.
Indeed for all $t<t_0$ with $t_0$ sufficiently small, the function $\psi(t,\cdot)$ is bounded above
by $\psi(t_0,\cdot)$ and $\int \exp(-g)\psi(t_0,\x)d\mu<\infty$. Similarly, for every $t>0$
\[\frac{f(g-te_\alpha)-f(g)}{t}\,=\,
\int\exp(-g)\,\underbrace{\frac{\exp(t\x^\alpha)-1}{t}}_{\xi(t,\x)}\,d\mu(\x),\]
and by convexity of the function $t\mapsto \exp(t\x^\alpha)$
\[\lim_{t\downarrow0}\xi(t,\x)\,=\,\inf_{t\geq0}\xi(t,\x)\,=\,\exp(t\x^\alpha)'_{\vert t=0}\,=\,\x^\alpha.\]
Therefore, with exactly same arguments as before,
\begin{eqnarray*}
f'(g;-e_\alpha)&=&\lim_{t\downarrow 0}\frac{f(g-te_\alpha)-f(g)}{t}\\
&=&\int\x^\alpha\exp(-g)\,d\mu(\x)=-f'(g;e_\alpha),\end{eqnarray*}
and so 
\[\frac{\partial f(g)}{\partial g_\alpha}\,=\,-\int_{\R^n}\x^\alpha\,\exp(-g)\,d\mu(\x),\]
for every $\alpha$ with $\vert\alpha\vert\leq d$, which yields (\ref{aux2-1}). 
Similar arguments can used for the Hessian $\nabla^2f(g)$ which yields (\ref{aux2-2}).

So the Hessian $\nabla^2f(g)$ is the matrix $\M_d\in\s_{\ell(d)}$ whose rows and columns are indexed 
in the set $\Gamma_d:=\{\alpha\in\N^n:\vert\alpha\vert=d\}$ and with entries
\[\M_d(\alpha,\beta)\,=\,\int_{\R^n}\x^{\alpha+\beta}\,\underbrace{\exp(-g)\,d\mu}_{d\nu},\qquad \alpha,\beta\in\Gamma_d,\]
i.e., $\M_d$ is the matrix of $2d$-moments of the finite Borel measure $\nu$.
Let $\h\in\R^{\ell(d)}$ be the coefficient vector of a non trivial and arbitrary homogeneous polynomial $h\in\R[\x]_d$. Then
\[\langle\h,\M_d\h\rangle\:\left(\,=\,\int_{\R^n}h(\x)^2\,d\nu(\x)\right)\,>\,0\]
because $\mu(O)>0$ (hence $\nu(O)>0$) on some open set $O\subset\R^n$. Therefore $\nabla^2f(g)\succ0$, which in turn implies that 
$f$ is strictly convex  on ${\rm int}(\Theta_\mu)$.
\end{proof}

\begin{corollary}
\label{coro-convex}
Let $h$ be a PHF of degree $p\in\R$ and with $0\neq d\in\R$, consider the function
$f_h: C_d\to\R$ defined by:
\begin{equation}
\label{coro1-0}
g\,\mapsto\quad f_h(g)\,:=\,\dis\int_{\{\x:g(\x)\leq 1\}}h(\x)\,d\x,\qquad \forall g\in C_d.
\end{equation}
The function $f_h$ is a PHF of degree $-(n+p)/d$ and convex whenever $h$ is nonnegative
(and strictly convex if $h>0$ on $\R^n\setminus\{0\}$).
In addition, if  $d\in\N$ and $h$ is continuous with $\int \vert h\vert \exp(-g)d\x<\infty$, then:\\

{\rm (a)} $f_h$ is twice differentiable on ${\rm int}(\P[\x]_d)$, and for every 
$\alpha,\beta\in\N^n_{d}$:
\begin{eqnarray}
\label{coro1-1}
\frac{\partial f_h(g)}{\partial g_\alpha}&=&\frac{-1}{\Gamma(1+(n+p)/d)}
\dis\int_{\R^n} \x^\alpha \,h(\x)\exp(-g(\x))\,d\x\\
\label{coro1-2}
&=&\frac{-\Gamma(2+(n+p)/d)}{\Gamma(1+(n+p)/d)}\dis\int_{\{\x\,:\,g(\x)\leq 1\}} \x^\alpha \,h(\x)\,d\x.\\
\label{coro1-3}
\frac{\partial^2 f_h(g)}{\partial g_\alpha\partial g_\beta}&=&
\frac{1}{\Gamma(1+(n+p)/d)}\dis\int_{\R^n} \x^{\alpha+\beta} \,h(\x)\exp(-g(\x))\,d\x.
\end{eqnarray}
(b) If $h$ is non trivial and nonnegative, then $f_h$ is strictly convex
on ${\rm int}(\P[\x]_d)$, and its Hessian $\nabla^2f_h(g)$ is the matrix of 
$2d$-moments of the measure
\[\nu(B)=\frac{1}{\Gamma(1+(n+p)/d)}\dis\int_{B} h\,\exp(-g)\,d\x,\qquad B\in\mathcal{B}.\]
\end{corollary}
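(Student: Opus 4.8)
The plan is to collapse $f_h$ onto the functionals already controlled by Lemma \ref{lemma-aux2}. Setting $y=1$ in (\ref{b2}) of Theorem \ref{thmain} gives
\[f_h(g)\,=\,\frac{1}{\Gamma(1+(n+p)/d)}\int_{\R^n}h\,\exp(-g)\,d\x,\]
so that $f_h$ is, up to the fixed positive constant $1/\Gamma(1+(n+p)/d)$, the map $g\mapsto\int_{\R^n}h\exp(-g)\,d\x$. Every claimed property will be obtained by transporting the corresponding property of this map through that constant.

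Homogeneity I would read off directly from (\ref{b2}) rather than from the displayed identity: for $\lambda>0$ one has $\{\x:\lambda g(\x)\leq1\}=\{\x:g(\x)\leq1/\lambda\}$, so applying (\ref{b2}) with $y=1/\lambda$ yields $f_h(\lambda g)=\lambda^{-(n+p)/d}f_h(g)$, i.e. $f_h$ is a PHF of degree $-(n+p)/d$ in the argument $g$. For convexity, when $h\geq0$ the assignment $d\mu:=h\,d\x$ is a $\sigma$-finite positive Borel measure and the displayed identity exhibits $f_h$ as a positive multiple of $g\mapsto\int\exp(-g)\,d\mu$; convexity, and strict convexity when $h>0$ off the origin (because then $\supp\mu=\R^n$), are then precisely Lemma \ref{lemma-aux2}(a).

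For part (a) I keep $d\mu=h\,d\x$ but now permit $h$ of either sign through the decomposition $h=h^+-h^-$ already used in the proof of Theorem \ref{thmain}: each of $h^\pm$ is a continuous nonnegative PHF of degree $p$, so Lemma \ref{lemma-aux2}(b) applies to $f_{h^\pm}$, and linearity of $g\mapsto\int h\exp(-g)\,d\x$ together with division by $\Gamma(1+(n+p)/d)$ yields (\ref{coro1-1}) and the Hessian formula (\ref{coro1-3}). To reach the sublevel-set form (\ref{coro1-2}) I note that for $\alpha\in\N^n_d$ the function $\x\mapsto\x^\alpha h(\x)$ is again a PHF, of degree $|\alpha|+p=d+p$; since $\int|\x^\alpha h|\exp(-g)\,d\x<\infty$ on $\mathrm{int}(\P[\x]_d)$, the last sentence of Theorem \ref{thmain} lets me feed it into (\ref{b2}) at $y=1$, converting $\int\x^\alpha h\exp(-g)\,d\x$ into $\Gamma(2+(n+p)/d)\int_{\{g\leq1\}}\x^\alpha h\,d\x$ via the identity $1+(n+d+p)/d=2+(n+p)/d$.

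Part (b) is the case $h\geq0$ non trivial: then the continuous $h$ is strictly positive on some open set $O$, so $\mu(O)>0$ and Lemma \ref{lemma-aux2}(b) delivers both the strict positive definiteness of the Hessian on $\mathrm{int}(\P[\x]_d)$ and its identification as the matrix of $2d$-moments of the normalized measure $\nu$, with indices ranging over $\Gamma_d=\{\alpha:|\alpha|=d\}$, the coordinates that genuinely vary on $\P[\x]_d$. I expect the only real obstacle to be the justification of differentiation under the integral sign behind (\ref{coro1-1})--(\ref{coro1-3}); but this interchange is exactly what Lemma \ref{lemma-aux2}(b) has already secured via the Extended Monotone Convergence Theorem, so the remaining work is the bookkeeping of the $h=h^+-h^-$ splitting and the elementary Gamma-function identity above.
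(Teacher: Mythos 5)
Your proposal is correct and follows essentially the same route as the paper: reduce $f_h$ via Theorem \ref{thmain} to a constant multiple of $g\mapsto\int\exp(-g)\,d\mu$ with $d\mu=h\,d\x$, invoke Lemma \ref{lemma-aux2}(a)--(b), split $h=h^+-h^-$ for part (a), and obtain (\ref{coro1-2}) from (\ref{coro1-1}) together with Theorem \ref{thmain} applied to the degree-$(d+p)$ PHF $\x^\alpha h$. The only cosmetic deviation is that you read off homogeneity from (\ref{b2}) at $y=1/\lambda$ instead of the paper's direct change of variable $\z=\lambda^{1/d}\x$, which is the same computation one layer deeper.
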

\begin{proof}
With $\lambda>0$ and $g\in C_d$,
\[f_h(\lambda g)=\int_{\{\x:\lambda g(\x)\leq1\}}h\,d\x=\int_{\{\x:g(\lambda^{1/d}x)\leq1\}}h\,d\x,\]
and so, doing the change of variable $\z=\lambda^{1/d}\x$, one obtains
$f_h(\lambda g)=\lambda^{-(n+p)/d}f_h(g)$, i.e.,
$f_h$ is a PHF of degree $-(n+p)/d$.

Next, first consider the case where $h$ is nonnegative, and let
$\mu$ be the $\sigma$-finite measure defined by $\mu(B):=\dis\int_Bh(\x)d\x$ for every Borel set $B$ of $\R^n$. 
By Theorem \ref{thmain}, whenever $g\in C_d$ and $f_h(g)$ is finite,
\[f_h(g)\,=\,\frac{1}{\Gamma(1+(n+p)/d)}\dis\int_{\R^n}\exp(-g)\,d\mu.\]
Hence by Lemma \ref{lemma-aux2}, $f_h$ is convex (and strictly convex if $h>0$ on $\R^n\setminus\{0\}$).

In addition, if $d\in\N$ and $h$ is continuous, $f_{h}(g)<\infty$ if $g\in{\rm int}(\P[\x]_d)$ 
(and so $g\in{\rm int}(\Theta_\mu)$).
Moreover, $h$ being non trivial, nonnegative and continuous, $h>0$ on some open set $O$ and so 
$\mu(O)>0$.
Therefore, by Lemma \ref{lemma-aux2}(b), $f_h$ is twice differentiable and strictly convex on ${\rm int}(\P[\x]_d)$
and (\ref{aux2-1})-(\ref{aux2-2}) yield (\ref{coro1-1}) and (\ref{coro1-3}), while (\ref{coro1-2}) follows from
(\ref{coro1-1}) and Theorem \ref{thmain}. That $\nabla^2f_h(g)$ is the matrix of
$2d$-moments of the measure $d\nu=h\exp(-g)d\x$ follows from (\ref{coro1-3}). This proves (b).

To prove (a) when $h$ is not nonnegative, write $h=h^+-h^-$ with $h^+:=\max[0,h]$ and
$h^-:=\max[0,-h]$. Both $h^+$ and $h^-$ are continuous PHFs of degree $p$, and nonnegative. 
Moreover, $f_h=f_{h^+}-f_{h^-}$ and 
so applying  (b) to $f_{h^+}$ and $f_{h^-}$, yields (a) by additivity.

\end{proof}

\begin{remark}
\label{remark-PHF}
(a) Notice that proving convexity of $f_h$ directly from its definition (\ref{coro1-0}) is not obvious at all whereas it becomes 
much easier when using Theorem \ref{thmain}.

(b) In Lemma \ref{lemma-aux2}, differentiability of $f$ on the convex cone $C_d$  should be now
in the sense of G\^ateaux-differentiablity, not explored here. 
\end{remark}

We end up with the following relatively surprising results which even though are again particular cases of
Lemma \ref{lemma0}, deserve special mention.

\begin{lemma}
\label{lemma4}
Let $y\geq0$ be fixed, let $h$ be a PHF of degree $p\in\R$ and let  $\xi,\psi:\R_+\to\R$ be 
measurable functions such that
\[\dis\int_{\{t:\psi(t^d)\leq y\}}t^{n+p-1}\xi(t^d)\,dt\,<\,+\infty.\]
Let $f_h:C_d\to\R$, $0\neq d\in\R$, be the function:
\[g\mapsto f_h(g)\,:=\,\int_{\{\x\,:\,\psi(g(\x))\leq y\}}\xi(g(\x))\,h(\x)\,d\x,\qquad g\in C_d.\]
Then whenever $\int \vert h\vert \exp(-g)d\x<+\infty$, $f_h(g)$ is finite, and
\begin{equation}
\label{lemma4-1}
f_h(g)\,=\,\underbrace{\frac{d\dis\int_{\{t:\psi(t^d)\leq y\}}t^{n+p-1}\xi(t^d)\,dt}{\Gamma((n+p)/d)}}_{{\rm cte}(y,p,d,\xi,\psi)}\cdot
\int_{\R^n}h\exp(-g)d\x,
\end{equation}
where the constant depends only on $\xi,\psi,d,p,y$ and neither on $g$ nor $h$. Therefore,
$f_h$ is convex whenever $h$ is nonnegative (and strictly convex on $C^0_d$ whenever $h>0$ on $\R^n\setminus\{0\}$). In particular,

\begin{eqnarray}
\label{coroeuler-1}
\int_{\R^n} gh\,\exp(-g)\,d\x&=&\frac{n+p}{d}\int_{\R^n}h\,\exp(-g)\,d\x\\
\label{coroeuler-2}
\int_{\{\x\,:\,g(\x)\leq 1\}}  gh\,d\x&=&\frac{1}{\Gamma((n+p)/d)}\int_{\R^n}h\,\exp(-g)\,d\x\\
\label{lemma4-2}
\frac{\dis\int_{\{\x\,:\,g(\x)\leq y\}}  \exp(-g)\,d\x}{\dis\int_{\R^n}\,\exp(-g)\,d\x}&=&\frac{\dis\int_0^y\,\exp(-z)z^{n/d-1}\,dz}
{\Gamma(n/d)}\\
\label{lemma4-3}
\int_{\{\x\,:\,g(\x)\leq y\}}  \exp(g)\,d\x&=&\frac{\dis\int_{\R^n}\,\exp(-g)\,d\x}{\Gamma(n/d)}\dis\int_0^y\,\exp(z)z^{n/d-1}\,dz.
\end{eqnarray}
\end{lemma}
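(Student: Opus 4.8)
The plan is to observe that Lemma \ref{lemma4}, despite its more elaborate appearance, is nothing but Lemma \ref{lemma0} in disguise: the level-set constraint $\psi(g(\x))\leq y$ and the radial weight $\xi(g(\x))$ can be folded into a single univariate function to which Lemma \ref{lemma0} applies verbatim. First I would introduce $\Phi:\R_+\to\R$ defined by
\[\Phi(t)\,:=\,\xi(t)\,1_{\{\psi(t)\leq y\}},\]
so that on all of $\R^n$ one has $\xi(g(\x))\,1_{\{\psi(g(\x))\leq y\}}=\Phi(g(\x))$, whence
\[f_h(g)\,=\,\int_{\{\x\,:\,\psi(g(\x))\leq y\}}\xi(g(\x))\,h(\x)\,d\x\,=\,\int_{\R^n}\Phi(g(\x))\,h(\x)\,d\x.\]
The hypothesis $\int_{\{t\,:\,\psi(t^d)\leq y\}}t^{n+p-1}\xi(t^d)\,dt<\infty$ is precisely the statement that $\int_0^\infty t^{n+p-1}\Phi(t^d)\,dt$ is finite, which is exactly the integrability requirement of Lemma \ref{lemma0} applied with $\phi=\Phi$. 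Hence (\ref{lemma4-1}) is just (\ref{expo}) rewritten, once the one-dimensional integral of $\Phi$ is identified with the integral over $\{t:\psi(t^d)\leq y\}$ that appears inside the constant ${\rm cte}(y,p,d,\xi,\psi)$.

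Second, since Lemma \ref{lemma0} is stated for nonnegative $\phi$ and nonnegative $h$, I would handle signs exactly as in the proof of Theorem \ref{thmain}: decompose $\Phi=\Phi^+-\Phi^-$ and $h=h^+-h^-$ into their nonnegative parts (each $h^\pm$ is still a PHF of degree $p$), apply Lemma \ref{lemma0} to the four nonnegative products $\Phi^{\pm}h^{\pm}$, and recombine by linearity. The assumption $\int|h|\exp(-g)\,d\x<\infty$ together with the integrability of $\Phi$ makes each of the four integrals finite, so $f_h(g)$ is finite and collapses to (\ref{lemma4-1}). The only technical point worth pinning down is that the integrability hypothesis should be read as \emph{absolute} finiteness of the one-dimensional integral, so that $\int_0^\infty t^{n+p-1}\Phi^{\pm}(t^d)\,dt$ are separately finite; in all the instances used below $\xi\geq0$ and this is automatic.

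Third, convexity is immediate from (\ref{lemma4-1}). Because the constant ${\rm cte}(y,p,d,\xi,\psi)$ does not depend on $g$, the map $g\mapsto f_h(g)$ equals that constant times $g\mapsto\int_{\R^n}h\exp(-g)\,d\x$. When $h\geq0$ I would set $d\mu:=h\,d\x$, a nonnegative $\sigma$-finite Borel measure, and invoke Lemma \ref{lemma-aux2}(a) to conclude that $g\mapsto\int\exp(-g)\,d\mu$ is convex on $C_d$, and strictly convex on $C^0_d$ when $\supp\mu=\R^n$, i.e. when $h>0$ on $\R^n\setminus\{0\}$. Multiplying by the (nonnegative) constant preserves (strict) convexity; nonnegativity of the constant holds whenever $\xi\geq0$, as it does in every application here.

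Finally, each of (\ref{coroeuler-1})--(\ref{lemma4-3}) is a specialization of (\ref{lemma4-1}) for a particular triple $(\xi,\psi,y)$, the constant being evaluated by the substitution $z=t^d$ (so that $t^{n+p-1}\,dt=\frac{1}{d}\,z^{(n+p)/d-1}\,dz$) followed by recognition of the Gamma integral. I would take $\psi\equiv0$, making the domain all of $\R^n$, with $\xi(t)=t\exp(-t)$ for (\ref{coroeuler-1}); $\xi(t)=t$, $\psi(t)=t$, $y=1$ for (\ref{coroeuler-2}); and $h\equiv1$, $\psi(t)=t$ with $\xi(t)=\exp(-t)$ and $\xi(t)=\exp(t)$ for (\ref{lemma4-2}) and (\ref{lemma4-3}) respectively. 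I expect no obstacle of substance: the entire content is the reduction to Lemma \ref{lemma0}, and the only care required is the sign bookkeeping and the integrability reading noted above.
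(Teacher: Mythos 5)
Your proposal is correct and follows essentially the same route as the paper: fold the constraint and the weight into the single function $\phi(t)=\xi(t)\,1_{\{\psi(t)\leq y\}}$, apply Lemma \ref{lemma0}, handle signs by decomposition into nonnegative parts, obtain convexity from Lemma \ref{lemma-aux2}(a) with $d\mu=h\,d\x$, and specialize $(\xi,\psi,y)$ to get the displayed identities. If anything, you are slightly more careful than the paper, which applies Lemma \ref{lemma0} (stated for nonnegative $\phi$) without splitting $\xi$ into $\xi^{\pm}$, leaves implicit the requirement $\xi\geq0$ behind the convexity assertion, and lists $\xi(t)=\exp(-t)$ where your choice $\xi(t)=t\exp(-t)$ for (\ref{coroeuler-1}) is the correct one.
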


\begin{proof}
We first assume that $h$ is nonnegative and $\int h\exp(-g)<+\infty$, so that
(\ref{lemma4-1}) follows from Lemma \ref{lemma0} with $t\mapsto \phi(t):=\xi(t)I_{[0,y]}(\psi(t))$, which yields
$f_h(g)={\rm cte}(y,p,d,\xi,\psi)\cdot A(g,h)$, with $A(\cdot,\cdot)$ as in (\ref{A}) and
\[{\rm cte}(y,p,d,\xi,\psi)\,=\,\int_{\{t:\psi(t^d)\leq y\}}t^{n+p-1}\xi(t^d)\,dt,\]
and the result follows by recalling that with $\phi(t)=\exp(-t)$ one had already obtained in (\ref{AA})
\[A(g,h)\,=\,\frac{d}{\Gamma((n+p)/d)}\,\int_{\R^n}h\,\exp(-g)\,d\x\:(\,<\,+\infty).\]
When $h$ is not nonnegative, writing $h=h^+-h^-$ where both $h^+$ and $h^-$ are also PHFs of degree $p$,
the result follows by additivity since $\int \vert h\vert \exp(-g)d\x<+\infty$ only if both
$\int h^+\exp(-g)d\x$ and $\int h^-\exp(-g)d\x$ are finite.

Finally, (\ref{coroeuler-1})-(\ref{lemma4-3}) are special cases of (\ref{lemma4-1}) with
respective choices $t\mapsto\psi(t):=0,\xi(t):=\exp(-t)$, then
$t\mapsto \psi(t):=t,\xi(t)=t$ and finally, $t\mapsto \psi(t)=t,\xi(t):=\exp(-t)$ and $t\mapsto \xi(t):=\exp(t)$.

At last, when $h$ is nonnegative, convexity and strict convexity follow from Lemma 
\ref{lemma-aux2}(a) with $d\mu=hd\x$ (and so $\supp\mu=\R^n$ if $h>0$).
\end{proof}

So Lemma \ref{lemma4} shows that $f$ is convex provided that $h$ is nonnegative and no matter 
how the functions $\xi$ and $\psi$ behave! 

Next, if $\mu$ is the non Gaussian measure
$d\mu=\exp(-g)d\x$ on $\R^n$, then (\ref{lemma4-2}) shows how fast $\mu(\{\x:g(\x)\leq y\})$
converges to the non Gaussian integral $\int_{\R^n}\exp(-g)d\x$ as $y\to\infty$. It converges as fast
as the one-dimensional integral $\int_0^yt^{n/d-1}\exp(-t)dt$ converges to the Gamma function $\Gamma(n/d)$.

\subsection{Polarity}

We here investigate the {\it polar} $G^\circ$ set of the sublevel set $G:=\{\x\,:\,g(\x)\leq1\}$ assumed to be compact 
and when $g$ is a proper closed convex PHF. In this case $G$ is a convex body, and in fact $g$ is a {\it gauge}.

The polar $C^\circ$ of a set $C\subset\R^n$ is the convex set defined by
\[C^\circ\,=\,\{\x\in\R^n\::\: \sigma_C(\x)\,\leq\,1\}\quad\mbox{with }\sigma_C(\x):=\sup_\y\,\{\langle\x,\y\rangle\,:\,\y \in C\}.\]
and $(C^\circ)^\circ$ is the smallest convex balanced set that contains $C$. 

Recall that the Legendre-Fenchel {\it conjugate} $f^*:\R^n\to\R\cup\{+\infty,-\infty\}$ of $f:\R^n\to\R$ is defined by
\[f^*(\u)\,:=\,\sup_\x\:\{\langle\u,\x\rangle -f(\x)\,\}\qquad \u\in\R^n.\]
The conjugate $g^*$ of a PHF $g$ of degree $d\in\R$ is tself
a PHF of degree $q$ with $\frac{1}{d}+\frac{1}{q}=1$
(where $d$ does not need to be positive); see e.g. Lasserre \cite{jca}.

\begin{proposition}
\label{th-polar}
Let $g$ be closed proper convex PHF of degree $1<d\in\R$, and 
let $G:=\{\x\,:\,g(\x)\leq1/d\}$. Then with $\frac{1}{d}+\frac{1}{q}=1$,
\begin{eqnarray}
\label{th-polar-1}
G^\circ&=&\{\x\in\R^n\::\: g^*(\x)\,\leq\,1/q\},
\end{eqnarray}
where $g^*$ is the Legendre-Fenchel conjugate of $g$. In other words, $G^\circ$ is a sublevel set the PHF $g^*$ of degree $q$. Moreover, if $G$ is bounded then
\begin{equation}
\label{th-polar-3}
{\rm vol}\,(G^\circ)\,=\,\frac{1}{q^{n/q}\Gamma(1+n/q)}\int_{\R^n}\exp(-g^*)\,d\x.
\end{equation}
\end{proposition}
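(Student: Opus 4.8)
The plan is to prove the two assertions in turn: first the identification of the polar set (\ref{th-polar-1}), and then the volume formula (\ref{th-polar-3}), the latter being an immediate consequence of the former together with Theorem \ref{thmain}. For the polar, I would compute the support function $\sigma_G$ of $G=\{\x:g(\x)\le 1/d\}$ explicitly and show that $\sigma_G(\x)=(q\,g^*(\x))^{1/q}$. Once this is established, (\ref{th-polar-1}) follows at once since $G^\circ=\{\x:\sigma_G(\x)\le 1\}$ and $(q\,g^*(\x))^{1/q}\le 1\iff g^*(\x)\le 1/q$.

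To obtain the inequality $\sigma_G(\x)\le(q\,g^*(\x))^{1/q}$, I would start from Young's inequality $\la s\x,\y\ra\le g(\y)+g^*(s\x)$ and use that $g^*$ is a PHF of degree $q$ (so $g^*(s\x)=s^q g^*(\x)$), together with the recalled relation $1/d+1/q=1$. For $\y\in G$, i.e. $g(\y)\le 1/d$, this yields $\la\x,\y\ra\le \frac{1}{ds}+s^{q-1}g^*(\x)$ for every $s>0$. Minimizing the right-hand side over $s>0$ (the minimizer satisfies $s^q=1/(q\,g^*(\x))$, using $d(q-1)=q$) gives exactly the value $(q\,g^*(\x))^{1/q}$. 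Taking the supremum over $\y\in G$ then proves $\sigma_G(\x)\le(q\,g^*(\x))^{1/q}$, hence the inclusion $\{\x:g^*(\x)\le 1/q\}\subseteq G^\circ$.

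The main obstacle is the reverse inequality, for which I would exhibit an actual maximizer. Since $G$ is compact, $g$ is coercive and the supremum defining $g^*(\x)=\sup_\y\{\la\x,\y\ra-g(\y)\}$ is attained at some $\y_0$ with $\x\in\partial g(\y_0)$. The key technical ingredient is a generalized Euler identity for convex PHFs: if $\x\in\partial g(\y_0)$ then $\la\x,\y_0\ra=d\,g(\y_0)$. This follows from the subgradient inequality restricted to the ray $t\mapsto t\y_0$, namely $t^d g(\y_0)=g(t\y_0)\ge g(\y_0)+(t-1)\la\x,\y_0\ra$ for all $t>0$: the convex map $t\mapsto t^d g(\y_0)$ is differentiable at $t=1$ with derivative $d\,g(\y_0)$, so the slope of any supporting line at $t=1$ must equal $d\,g(\y_0)$. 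Consequently $g^*(\x)=(d-1)g(\y_0)$ and $\la\x,\y_0\ra=q\,g^*(\x)$. Rescaling to $\y:=\lambda\y_0$ with $\lambda^d=1/(d\,g(\y_0))$ places $\y$ on the boundary $\{g=1/d\}$, and a short computation (using $(d-1)/d=1/q$ and $q^{\,1-1/d}=q^{1/q}$) gives $\la\x,\y\ra=(q\,g^*(\x))^{1/q}$, so $\sigma_G(\x)\ge(q\,g^*(\x))^{1/q}$. This yields equality, and hence (\ref{th-polar-1}).

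Finally, for (\ref{th-polar-3}), since $g$ is a convex PHF of degree $d>1$ and $0\in\mathrm{int}\,G$ (because $g(0)=0<1/d$), the polar $G^\circ$ of the convex body $G$ is itself a convex body; moreover $g^*$ is a nonnegative PHF of degree $q$ (nonnegativity from $g^*(\x)\ge\la\x,0\ra-g(0)=0$) with bounded sublevel set, so $g^*\in C_q$. Applying Theorem \ref{thmain}, formula (\ref{b1}), to $g^*$ with degree $q$ and level $y=1/q$ gives $\vol(G^\circ)=\vol(\{\x:g^*(\x)\le 1/q\})=\frac{(1/q)^{n/q}}{\Gamma(1+n/q)}\int_{\R^n}\exp(-g^*)\,d\x$, which is exactly (\ref{th-polar-3}).
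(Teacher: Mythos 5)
Your treatment of (\ref{th-polar-3}) is exactly the paper's: apply Theorem \ref{thmain} (formula (\ref{b1})) to the degree-$q$ PHF $g^*$ at level $y=1/q$. The difference is in (\ref{th-polar-1}): the paper disposes of it in one line by citing Rockafellar \cite[Corollary 15.3.2]{rockafellar}, whereas you reprove that corollary directly by showing $\sigma_G=(q\,g^*)^{1/q}$. Your computations check out: the Fenchel--Young bound minimized over the scaling $s$ (the minimizer $s^q=1/(q\,g^*(\x))$ and the value $(q\,g^*(\x))^{1/q}$ both use $d(q-1)=q$ correctly) gives $\sigma_G\leq (q\,g^*)^{1/q}$; for the reverse inequality, the subgradient Euler identity $\la\x,\y_0\ra=d\,g(\y_0)$ at a maximizer $\y_0$ (your supporting-line argument along the ray $t\mapsto t\y_0$ is valid, since $t=1$ is an interior minimum of a differentiable function) plus the rescaling onto $\{\x:g(\x)=1/d\}$ gives the matching lower bound, and the degenerate case $g^*(\x)=0$ is harmless since $0\in G$. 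So your route buys a self-contained argument where the paper leans on a citation, and it is a perfectly good substitute in the compact case.

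There is, however, a genuine gap of scope. The proposition asserts (\ref{th-polar-1}) for every closed proper convex PHF $g$ of degree $d>1$; boundedness of $G$ is assumed only in the ``Moreover'' part. Your proof of the reverse inequality opens with ``Since $G$ is compact'': compactness is what gives coercivity of $g$, finiteness of $g^*$, and attainment of the supremum defining $g^*(\x)$, all of which your Euler-identity argument requires. Without it the argument genuinely breaks: for $g(\x)=x_1^2$ on $\R^2$ (so that $G$ is an unbounded slab), one has $g^*(\x)=+\infty$ whenever $x_2\neq 0$, the supremum is not attained there, and there is no maximizer $\y_0$ to work with---yet (\ref{th-polar-1}) still holds (both sets equal the segment $\{(y_1,0)\,:\,\vert y_1\vert\leq\sqrt{2}\}$), as Rockafellar's corollary guarantees. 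In particular, your support-function identity needs the separate implication $g^*(\x)=+\infty\Rightarrow\sigma_G(\x)=+\infty$, and a limiting argument (approximate maximizers) where $g^*$ is finite but the supremum is not attained. So as written you have proved (\ref{th-polar-1}) only when $G$ is bounded, which suffices for (\ref{th-polar-3}) but not for the first claim in its stated generality.
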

\begin{proof}
(\ref{th-polar-1}) is from Rockafellar \cite[Corollary 15.3.2]{rockafellar} and (\ref{th-polar-3}) 
follows from Theorem \ref{thmain} applied to the PHF $g^*$ and with $y=1/q$.
\end{proof}
\begin{example}
\label{ex1}
Let $x\mapsto g(x):=\vert x\vert^3$ if $x>0$ and $+\infty$ otherwise. Hence, 
$d=3$, $q=3/2$,
$G=[-1,1]$, $\sigma_G(x)=\vert x\vert$, and $g^*(x)=\frac{2}{3\sqrt{3}}\vert x\vert^{3/2}$.
One retrieves that $G^\circ=G=[-1,1]=\{x:g^*(x)\leq (d-1)/d^q\}$.
\end{example}
\begin{example}
\label{ex2}
Let $x\mapsto g(x):=\vert x\vert$ so that $G=[-1,1]$. 
As $g^*(x)=0$ if $x\in [-1,1]$ and $+\infty$ otherwise
(a PHF of degree $0$) one may check that indeed $G^\circ=[-1,1]=G$ and
(\ref{th-polar-1}) holds although $d=1$ and $g$ is not strictly convex.
\end{example}
\begin{example}
\label{ex3}
Let $\x\mapsto g(\x):=x_1^4+x_2^4$ and $G=\{\x:\,x_1^4+x_2^4\leq 1/4\}$.
Then $g^*(\x)=3 (x_1^{4/3}+x_2^{4/3})/4^{4/3}$
(a PHF of degree $4/3$) and
$G^\circ=\{\x:\,x_1^{4/3}+x_2^{4/3}\leq 1/4^{1/3}\}$.
\end{example}

\subsection{A variational property of homogeneous polynomials}

We end up this section with an intriguing variational property of homogeneous polynomials that are sums of squares.

Let $\tv_d(\x)$ be the vector of all monomials $(\x^\alpha)$ of degree $d$ and
let $g\in\R[\x]_{2d}$ be homogeneous and a sum of squares, that is,
\begin{equation}
\label{newdef}
g(\x)\,=\,-\frac{1}{2}\tv_d(\x)^T\si\tv_d(\x),\qquad \x\in\R^n,
\end{equation}
for some real symmetric $\ell(d)\times \ell(d)$ matrix $\si$ which is positive definite (denoted $\si\succ0$). If $d=1$  it is well-known that 
\[\int_{\R^n} \exp(-g)d\x\,=\,\frac{(2\pi)^{n/2}}{\sqrt{\det\,\si}},\]
and
\[\int_{\R^n} \tv_d(\x)\tv_d^T(\x)\exp(-g)d\x\,=\,\frac{(2\pi)^{n/2}}{\sqrt{\det\,\si}}\si^{-1},\]
that is, $\si^{-1}$ is the covariance matrix associated with the Gaussian probability density $(2\pi)^{-n/2}\sqrt{\det \si}\exp(-g)$.

When $d>1$ the non Gaussian integral can be still expressed as a (possibly complicated) 
combination of several algebraic invariants of $g$, but in general not in terms of the single algebraic invariant $\det\si$.

It turns out that $\det\si$ and $\int \exp(-\frac{1}{2}\v_d(\x)^T\si\v_d(\x))d\x$ are still related in the following 
Gaussian-like manner.
Let $\s_{\ell(d)}^{++}\subset\s_{\ell(d)}$ be the convex cone of $\ell(d)\times \ell(d)$ positive definite matrices, and
let $\theta_d: \s_{\ell(d)}^{++}\to \R$ be defined by:
\begin{equation}
\label{def-f2}
\theta_d(\si)\,:=\,(\det\si)^k\dis\int_{\R^n} \exp(-k\tv_d(\x)^T\si\,\tv_d(\x))\,d\x,\qquad\si\in\s_{\ell(d)}^{++},\end{equation}
where $k=n/(2d\ell(d))$ and let
\[\M_d(\si)\,:=\,\frac{\dis\int_{\R^n}\tv_d(\x)\tv_d(\x)^T\exp(-k\tv_d(\x)^T\si\,\tv_d(\x))\,d\x}
{\dis\int_{\R^n}\exp(-k\tv_d(\x)^T\si\,\tv_d(\x))\,d\x},\]
be the matrix of moments of order $d$, associated with the non Gaussian probability measure
\begin{equation}
\label{defmu}
\mu(B)\,:=\,\frac{\dis\int_B\exp(-k\tv_d(\x)^T\si\tv_d(\x))\,d\x}
{\dis\int_{\R^n}\exp(-k\tv_d(\x)^T\si\tv_d(\x))\,d\x},\qquad B\in\mathcal{B}.\end{equation}
Observe that $\theta_d$ is nonnegative and positively homogeneous of degree $0$; therefore $\theta_d$ is constant in any fixed direction $\si$.
In particular, if $d=1$ then $k=1/2$, $\mu$ is a Gaussian probability measure,
$\M_1(\si)$ is the associated covariance matrix $\si^{-1}$ and $\theta_d(\si)$ is constant.
In fact, 

\begin{lemma}
\label{lemma1}
Let $\theta_d$ be the function defined in (\ref{def-f2}). Then
$\langle \M_d(\si),\si\rangle=\ell(d)$ for all $\si$ in the domain of $\theta_d$ and
$\nabla \theta_d(\si)=0$ if 
\begin{equation}
\label{lemma1-1}
\M_d(\si)\,=\,\si^{-1}.\end{equation}
\end{lemma}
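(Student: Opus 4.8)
The plan is to treat the two assertions separately, deriving the identity $\langle\M_d(\si),\si\rangle=\ell(d)$ from an Euler-type formula already at hand, and the critical-point characterization from a direct differentiation of $\theta_d$.

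For the first assertion, I would begin by observing that, since $\si\succ0$, the form $g(\x):=k\,\tv_d(\x)^T\si\,\tv_d(\x)$ is a positive definite homogeneous polynomial of degree $2d$, hence $g\in\P[\x]_{2d}$ and all the integrals under consideration are finite. Applying the Euler-type identity (\ref{coroeuler-1}) of Lemma \ref{lemma4} with $h\equiv1$ (so that $p=0$) and with the degree of $g$ equal to $2d$, gives $\int_{\R^n}g\exp(-g)\,d\x=\frac{n}{2d}\int_{\R^n}\exp(-g)\,d\x$. Writing $g=k\langle\si,\tv_d(\x)\tv_d(\x)^T\rangle$, pulling the fixed matrix $\si$ out of the integral, and dividing through by $\int_{\R^n}\exp(-g)\,d\x$, I recognize the remaining matrix integral as $\M_d(\si)$ and obtain $k\,\langle\si,\M_d(\si)\rangle=\frac{n}{2d}$. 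Substituting $k=n/(2d\,\ell(d))$ then yields $\langle\M_d(\si),\si\rangle=\ell(d)$ for every $\si$ in the domain.

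For the second assertion, I would write $\theta_d(\si)=(\det\si)^k\,Z(\si)$ with $Z(\si):=\int_{\R^n}\exp(-g)\,d\x$ and compute the gradient with respect to $\si$ in the trace inner product. Jacobi's formula gives $\nabla(\det\si)^k=k\,(\det\si)^k\,\si^{-1}$, while differentiating under the integral sign (using $\partial(\tv_d^T\si\tv_d)/\partial\si=\tv_d\tv_d^T$) gives $\nabla Z(\si)=-k\int_{\R^n}\tv_d(\x)\tv_d(\x)^T\exp(-g)\,d\x=-k\,Z(\si)\,\M_d(\si)$. The product rule then yields $\nabla\theta_d(\si)=k\,(\det\si)^k\,Z(\si)\,(\si^{-1}-\M_d(\si))$, and since the scalar prefactor is strictly positive, this vanishes precisely when $\M_d(\si)=\si^{-1}$, which in particular proves the stated \emph{if} direction.

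The main technical obstacle is the justification of differentiation under the integral sign for $Z$. I would reduce this to Lemma \ref{lemma-aux2}(b): the coefficients of the polynomial $g$ depend affinely (indeed linearly) on the entries of $\si$, so by the chain rule $\nabla Z$ is a linear combination of the partial derivatives $\partial/\partial g_\alpha\int\exp(-g)\,d\x$, which are already shown to exist and equal $-\int\x^\alpha\exp(-g)\,d\x$ on the interior of the relevant cone; since $\si\succ0$ places $g$ in that interior, the interchange is licensed. The only remaining care concerns the factor-of-two conventions on the off-diagonal entries when differentiating a quadratic form in a symmetric matrix, but these are routine and disappear once everything is phrased via the trace inner product $\langle\cdot,\cdot\rangle$.
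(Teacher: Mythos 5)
Your proof is correct and follows essentially the same route as the paper: the identity $\langle \M_d(\si),\si\rangle=\ell(d)$ via the Euler-type formula (\ref{coroeuler-1}) applied to $g=k\,\tv_d(\x)^T\si\,\tv_d(\x)$ with $h\equiv 1$, and the critical-point claim via the product rule, Jacobi's formula for $\nabla(\det\si)^k$, and differentiation under the integral sign, yielding $\nabla\theta_d(\si)=k\,\theta_d(\si)\bigl(\si^{-1}-\M_d(\si)\bigr)$. Your explicit justification of the interchange of differentiation and integration through Lemma \ref{lemma-aux2}(b) is a small addition in rigor over the paper's argument, which performs that step without comment.
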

\begin{proof}
Let $g(\x)=k\tv_d(\x)\si\tv_d(\x)$ so that
\begin{eqnarray*}
\langle \M_d(\si),\si\rangle\int_{\R^n} \exp(-g)\,d\x&=&\left\langle \int_{\R^n} \tv_d(\x)\tv_d(\x)^T\exp(-g)\,d\x,\si\right\rangle\\
&=&k^{-1}\int_{\R^n}k \tv_d(\x)^T\si\tv_d(\x)\exp(-g)\,d\x\\
&=&k^{-1}\int_{\R^n}g\,\exp(-g)\,d\x\\
&=&\frac{k^{-1}n}{2d}\int_{\R^n}\exp(-g)\,d\x
\quad\mbox{[by (\ref{coroeuler-1})]},
\end{eqnarray*}
which yields the desired result $\langle \M_d(\si),\si\rangle=\ell(d)$. Next,
write the gradient $\nabla\theta(\si)$ in the form $A_1+A_2$ with
\begin{eqnarray*}
A_1&=&\nabla((\det\si)^k)\int_{\R^n}\exp(-k\tv_d(\x)^T\si\tv_d(\x))\,d\x\\
&=&k(\det\si)^{k-1}\si^\A\int_{\R^n}\exp(-k\tv_d(\x)^T\si\tv_d(\x))\,d\x\\
&=&k\frac{\si^\A}{\det\si}(\det\si)^k\int_{\R^n}\exp(-k\tv_d(\x)^T\si\tv_d(\x))\,d\x\\
&=&k\si^{-1}\,\theta_d(\si),
\end{eqnarray*}
where $\si^\A$ is the {\it adjugate} of $\si$ (see e.g. \cite[p. 411]{bernstein}), and
\begin{eqnarray*}
A_2&=&(\det\si)^k\:\nabla\left(\int_{\R^n}\exp(-k\tv_d(\x)^T\si\tv_d(\x))\,d\x\right)\\
&=&-k(\det\si)^k\int_{\R^n}\tv_d(\x)\tv_d(\x)^T\exp(-k\tv_d(\x)^T\si\tv_d(\x))\,d\x\\
&=&-k\M_d(\si)\,\theta_d(\si).
\end{eqnarray*}
This yields $A_1+A_2=k\theta_d(\si)(\si^{-1}-\M_d(\si))$ and so $\nabla \theta_d(\si)=0$ if $\M_d(\si)=\si^{-1}$. 
\end{proof}
Lemma \ref{lemma1} states that for all critical points $\si$, or equivalently
for all critical SOS homogeneous polynomials $g$ of the function $\theta_d$ (assuming 
that at least one such critical point exists), 
their associated non Gaussian measure $d\mu =\exp(-g)d\mu$ (rescaled to a probability measure)
has the Gaussian-like property
that $\si^{-1}$ is the ``$d$-covariance" matrix of $\mu$!

\section{sublevel set of minimum volume containing a compact set}

If $\K\subset\R^n$ is a convex body, computing the ellipsoid of minimum volume that contains $\K$
is a classical problem which has an optimal solution called the {\it L\"owner-John} ellipsoid; see e.g. Barvinok \cite[p. 209]{barvinok}. In this section we consider the following generalization:

{\em $\P$: Find a homogeneous polynomial $g$ of degree $2d$ such that its sublevel set
$G:=\{\x\,:\,g(\x)\leq 1\}$ contains $\K$ and has minimum volume among all such sublevel sets
with this inclusion property.}

With $\K\subset\R^n$, let $C_{2d}(\K)\subset\R[\x]_{2d}$ be the convex cone
of polynomials of degree at most $2d$ that are nonnegative on $\K$. Recall that $\P[\x]_{2d}\subset\R[\x]_{2d}$
is the convex cone of homogeneous polynomials of degree $2d$ with compact sublevel set $\{\x:g(\x)\leq 1\}$.
We next show that problem $\P$ is a convex optimization problem:

\begin{proposition}
\label{min-vol-lemma}
The minimum volume of a sublevel set
$\{\x:g(\x)\leq1\}$, $g\in\P[\x]_{2d}$, that contains $\K\subset\R^n$ is $\rho/\Gamma(1+n/2d)$ where $\rho$ is the optimal value of the finite-dimensional convex optimization problem:
\begin{equation}
\label{minvolume}
\mathcal{P}:\qquad \rho=\dis\inf_{g\in\P[\x]_{2d}}\:\left\{\,\int_{\R^n} \exp(-g)\,d\x\::\: 1-g\,\in\,C_{2d}(\K)\,\right\}.
\end{equation}
\end{proposition}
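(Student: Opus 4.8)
The plan is to translate the two requirements in problem $\mathcal{P}$—that $G=\{\x:g(\x)\leq1\}$ contains $\K$, and that its volume be minimized—into the algebraic formulation stated in \eqref{minvolume}, and then to verify that the resulting problem is genuinely convex over a convex set. First I would address the containment constraint. For a homogeneous $g\in\P[\x]_{2d}$, the inclusion $\K\subset G$ holds if and only if $g(\x)\leq 1$ for every $\x\in\K$, which is precisely the statement that the polynomial $1-g$ is nonnegative on $\K$, i.e.\ $1-g\in C_{2d}(\K)$. This equivalence is immediate from the definitions of $G$ and of the cone $C_{2d}(\K)$, so this step is routine; I would just spell out both directions of the ``if and only if.''

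Next I would handle the objective. By Theorem \ref{thmain} (formula \eqref{b1}) applied with $y=1$ and the PHF $g\in\P[\x]_{2d}\subset C_{2d}$ of degree $2d$, the volume of $G$ is
\begin{equation*}
\vol(G)\,=\,\frac{1}{\Gamma(1+n/2d)}\int_{\R^n}\exp(-g)\,d\x.
\end{equation*}
Since $\Gamma(1+n/2d)$ is a fixed positive constant not depending on $g$, minimizing $\vol(G)$ over the feasible $g$ is equivalent to minimizing $\int_{\R^n}\exp(-g)\,d\x$, and the two optimal values are related by the factor $1/\Gamma(1+n/2d)$. This yields exactly the claimed relation between the minimum volume and the optimal value $\rho$ of $\mathcal{P}$.

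Finally I would argue convexity, which is the only place where real content beyond bookkeeping enters. The feasible set is convex: $\P[\x]_{2d}$ is a convex cone (stated in the excerpt), the map $g\mapsto 1-g$ is affine, and $C_{2d}(\K)$ is a convex cone, so the constraint $1-g\in C_{2d}(\K)$ cuts out a convex set, whose intersection with $\P[\x]_{2d}$ is convex. The objective $g\mapsto\int_{\R^n}\exp(-g)\,d\x$ is convex in $g$ by Lemma \ref{lemma-aux2}(a) applied with $\mu$ equal to Lebesgue measure on $\R^n$ (indeed it is strictly convex there, since $\supp\mu=\R^n$). Hence $\mathcal{P}$ minimizes a convex functional over a convex feasible set and is a genuine finite-dimensional convex optimization problem, the finite-dimensionality coming from parametrizing $g\in\P[\x]_{2d}$ by its coefficient vector in $\R^{\ell(2d)}$.

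I expect the main subtlety—though not a deep obstacle—to be justifying that the problem is well posed as stated, namely that one may restrict the optimization to $\P[\x]_{2d}$ rather than to all homogeneous polynomials of degree $2d$. The point is that if $G$ is to have finite (minimum) volume at all then the integral $\int_{\R^n}\exp(-g)\,d\x$ must be finite, which by Theorem \ref{thmain} forces $g\in C_{2d}$ and hence $g\in\P[\x]_{2d}$; so the restriction is without loss of generality and the convexity argument applies on the right domain. Everything else is a direct assembly of the containment reformulation, the volume formula from Theorem \ref{thmain}, and the convexity of $g\mapsto\int\exp(-g)\,d\x$ from Lemma \ref{lemma-aux2}.
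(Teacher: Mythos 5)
Your proof is correct and follows essentially the same route as the paper's: the containment $\K\subset\{\x:g(\x)\leq1\}$ reformulated as $1-g\in C_{2d}(\K)$, the volume formula of Theorem \ref{thmain} with $y=1$, and convexity of $g\mapsto\int_{\R^n}\exp(-g)\,d\x$ over the convex feasible set (the paper cites Corollary \ref{coro-convex}(b), which itself rests on the Lemma \ref{lemma-aux2}(a) you invoke directly, so this is the same argument one level down). Your final paragraph on well-posedness is superfluous since the statement already restricts to $g\in\P[\x]_{2d}$ by fiat, and its reasoning is actually backwards---Theorem \ref{thmain} \emph{assumes} boundedness of the sublevel set rather than deriving it from finiteness of the volume, and finite volume alone does not force a bounded sublevel set---but as that remark is not needed, it does not affect the correctness of the proof.
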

\begin{proof}
From Theorem \ref{thmain}
\[\vol(\{\x\,:\,g(\x)\leq 1\})\,=\,\frac{1}{\Gamma(1+n/2d)}\int_{\R^n}\exp(-g)\,d\x.\]
Moreover, the sublevel set $\{\x\,:\,g(\x)\leq 1\}$ contains $\K$ if and only if
$1-g\in C_{2d}(\K)$, and so $\rho/\Gamma(1+n/2d)$ in (\ref{minvolume}) is the minimum value of 
all volumes of sublevels sets $\{\x\,:\,g(\x)\leq 1\}$, $g\in\P[\x]_{2d}$, that contain $\K$.
Now since $g\mapsto \int_{\R^n}\exp(-g)d\x$ is strictly convex (see Corollary \ref{coro-convex}(b)) and $C_{2d}(\K)$ is a convex cone,
problem $\mathcal{P}$ is a finite-dimensional convex optimization problem.
\end{proof}
We also have the following characterization of an optimal solution of $\mathcal{P}$ when it exists.
Let $M(\K)$ be the convex cone of finite Borel measures on $\K$.
\begin{thm}
\label{vol-suff-cond}
Let $\K\subset\R^n$ be compact and consider the convex optimization problem $\mathcal{P}$ in (\ref{minvolume}).

{\rm (a)} Suppose that
$g^*\in\P[\x]_{2d}$ is an optimal solution of $\mathcal{P}$. Then there exists $\mu^*\in M(\K)$ 
such that
\begin{equation}
\label{kkt-suff}
\int_{\R^n}\x^\alpha\exp(-g^*)d\x\,=\,\int_{\K}\x^\alpha\,d\mu^*,\quad\forall\vert\alpha\vert=2d;\quad\int_\K(1-g^*)\,d\mu^*=0.\end{equation}
In particular, $\mu^*$ is supported on the real variety $V:=\{\x\in\K: g(\x)=1\}$ and in fact,
$\mu^*$ can be substituted with another 
measure $\nu^*\in M(\K)$ supported on at most ${n+2d-1\choose 2d}+1$ points of $V$.

{\rm (b)} Conversely, if $g^*\in\P[\x]_{2d}$ and $\mu^*\in M(\K)$ satisfy (\ref{kkt-suff}) then
$g^*$ is an optimal solution of $\mathcal{P}$.
\end{thm}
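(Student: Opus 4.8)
The plan is to treat $\mathcal{P}$ as a convex conic program and to read off its Karush--Kuhn--Tucker conditions, the Lagrange multiplier being a nonnegative measure on $\K$. The objective $F(g):=\int_{\R^n}\exp(-g)\,d\x$ is smooth and strictly convex on ${\rm int}(\P[\x]_{2d})$ by Corollary \ref{coro-convex}, and by Lemma \ref{lemma-aux2}(b) (taken with $\mu$ the Lebesgue measure) its partial derivatives are $\partial F/\partial g_\alpha=-\int_{\R^n}\x^\alpha\exp(-g)\,d\x$ for $|\alpha|=2d$. The single constraint $1-g\in C_{2d}(\K)$ is affine in $g$ and conic, so the natural Lagrangian is $\mathcal{L}(g,\mu)=\int_{\R^n}\exp(-g)\,d\x-\int_\K(1-g)\,d\mu$, where the multiplier $\mu$ ranges over the dual cone of $C_{2d}(\K)$. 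The first key point is to identify that dual cone: a linear functional on $\R[\x]_{2d}$ is nonnegative on every polynomial that is nonnegative on $\K$ precisely when it is integration against some $\mu\in M(\K)$, and since $\K$ is compact the associated truncated moment cone is closed, so such a functional is a genuine moment sequence and not merely a limit of such.

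Granting a multiplier $\mu^*\in M(\K)$ at the optimum $g^*$, stationarity $\nabla_g\mathcal{L}(g^*,\mu^*)=0$ reads, coordinate by coordinate, $-\int_{\R^n}\x^\alpha\exp(-g^*)\,d\x+\int_\K\x^\alpha\,d\mu^*=0$ for all $|\alpha|=2d$, which is exactly the first family of identities in (\ref{kkt-suff}), while complementary slackness for the conic constraint gives $\int_\K(1-g^*)\,d\mu^*=0$. Since $1-g^*\ge0$ on $\K$ by feasibility and $\mu^*\ge0$, this last identity forces $\mu^*$ to be supported on $V=\{\x\in\K:g^*(\x)=1\}$. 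The reduction to a finitely supported measure then follows from a Tchakaloff-type theorem: applying it to $\mu^*$ and the finite-dimensional space spanned by the constant $1$ together with the monomials $\{\x^\alpha:|\alpha|=2d\}$, of dimension $\binom{n+2d-1}{2d}+1$, produces $\nu^*\in M(\K)$ supported on at most that many points of $\supp\mu^*\subset V$ and matching all the relevant moments together with the mass, so $\nu^*$ satisfies (\ref{kkt-suff}) as well.

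The main obstacle is establishing existence of the multiplier $\mu^*$, i.e. strong duality for $\mathcal{P}$. The cleanest route is a constraint qualification: if there is a strictly feasible $g$ (with $1-g$ in the interior of $C_{2d}(\K)$, equivalently $g<1$ on $\K$), then the subdifferential of the objective together with the normal cone of the feasible set has the right closure and a multiplier is produced by a Hahn--Banach separation. Because $C_{2d}(\K)$ is a pointed cone in a finite-dimensional space and $\K$ is compact, strictly feasible points exist: any element of ${\rm int}(\P[\x]_{2d})$, scaled by a small positive factor so that its maximum over the compact set $\K$ is below $1$, will do. One must take care that stationarity is written only over the homogeneous coordinates $|\alpha|=2d$, since $g\in\P[\x]_{2d}$ carries no lower-degree terms, and that the identification of the normal cone with $M(\K)$ uses precisely the closedness of the moment cone noted above.

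For the converse part (b) no duality is needed, only convexity. Given a feasible $g^*$ and $\mu^*\in M(\K)$ satisfying (\ref{kkt-suff}), convexity of $F$ gives, for any feasible $g$, the inequality $F(g)\ge F(g^*)+\sum_{|\alpha|=2d}(\partial F/\partial g_\alpha)(g^*)\,(g_\alpha-g^*_\alpha)$. Substituting the gradient identity from (\ref{kkt-suff}) turns the linear term into $-\int_\K(g-g^*)\,d\mu^*$; using $\int_\K g^*\,d\mu^*=\mu^*(\K)$ (a restatement of complementary slackness) together with $g\le1$ on $\K$ and $\mu^*\ge0$ shows $\int_\K g\,d\mu^*\le\mu^*(\K)$, so the linear term is nonnegative. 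Hence $F(g)\ge F(g^*)$ for every feasible $g$, and $g^*$ is optimal, which proves (b).
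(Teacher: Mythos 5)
Your proposal is correct and follows essentially the same route as the paper: strict feasibility (Slater) via a scaled norm-like element of ${\rm int}(\P[\x]_{2d})$, KKT stationarity over the homogeneous coordinates plus complementary slackness, identification of $C_{2d}(\K)^*$ with the truncated moment cone of $M(\K)$ (the paper's Lemma \ref{dual-ck}, whose key point is exactly the closedness you invoke), and a Tchakaloff/Anastassiou-type reduction to finitely many points of $V$. Your part (b) merely writes out explicitly the first-order convexity inequality that underlies the paper's one-line appeal to sufficiency of the KKT conditions, so the two arguments coincide in substance.
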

\begin{proof}
(a) We may and will consider $g^*$ as an element of $\R[\x]_{2d}$ with 
$g^*_\beta=0$ whenever $\vert\beta\vert<2d$.
As $\K$ is compact, there exists $\theta\in\P[\x]_{2d}$ such that
$1-\theta\in{\rm int}\,C_{2d}(\K)$, i.e., Slater's condition holds for $\mathcal{P}$. 
Indeed, choose $\theta:=M^{-1}\Vert\x\Vert^{2d}$ for $M>0$ sufficiently large so
that $1-\theta>0$ on $\K$. Hence with $\Vert g\Vert_1$ 
denoting the $\ell_1$-norm of the coefficient vector of $g$ (in $\R[\x]_{2d}$),
there exists $\epsilon>0$ such that
for every $h\in B(\theta,\epsilon)(:=\{h\in\R[\x]_{2d}:\Vert \theta-h\Vert_1<\epsilon\}$), 
$1-h>0$ on $\K$. 

Therefore, the optimal solution $g^*$ satisfies the KKT-optimality conditions, which read:
\begin{equation}
\label{aux60}
\int_{\R^n}\x^\alpha\,\exp(-g^*)\,d\x=y^*_\alpha,\quad\forall\vert\alpha\vert=2d;\quad 
\langle 1-g^*,\y^*\rangle =0,\end{equation}
for some $\y^*=(y^*_\alpha)$, $\alpha\in\N^n_{2d}$, an element of the dual cone $C_{2d}(\K)^*\subset\R^{s(2d)}$ of $C_{2d}(\K)$. By Lemma \ref{dual-ck} in \S \ref{dualck},
\[C_{2d}(\K)^*\,=\,\{\y\in\R^{s(2d)}\::\: \exists\mu^*\in M(\K)\mbox{ s.t. }y_\alpha=\int_\K\x^\alpha\,d\mu^*,\:\alpha\in\N^n_{2d}\:\},\]
and so (\ref{kkt-suff}) is just (\ref{aux60}) restated in terms of $\mu^*$.
Finally, the last statement follows from Anastassiou
\cite[Theorem 2.1.1, p. 39]{anastassiou} applied to the ${n+2d-1\choose 2d}$ equality constraints of (\ref{kkt-suff}).

(b) As Slater's condition holds for $\mathcal{P}$, the KKT-optimality conditions (\ref{kkt-suff}) are sufficient
to ensure that $g^*$ is an optimal solution on $\mathcal{P}$.
\end{proof}
Theorem \ref{vol-suff-cond} states that when (\ref{kkt-suff}) holds, there is an optimal solution $g^*$ to $\mathcal{P}$
such that $g(\x)=1$ on (at most) ${n+2d-1\choose 2d}+1$ points of $\K$, the analogue for $d>1$ of
the well-known property of the L\"owner-John ellipsoid in the case $d=1$.\\

Even though being convex and finite-dimensional, $\mathcal{P}$ is by no means easy to solve because there is no simple and computationally tractable way of describing the convex cone $C_{2d}(\K)$. However, there are cases where one may provide a sequence of inner or outer approximations that both converge to $C_{2d}(\K)$. One such case is when one knows 
all moments of a finite Borel measure whose support is exactly $\K$, and another case is when 
$\K=\{\x\,:\,g_k(\x)\geq 0,\:k=1,\ldots,m\}$ for some polynomials $(g_k)\subset\R[\x]$, i.e.,
$\K$ is a compact basic semi-algebraic set.

\subsection{Lower bounds via inner approximations}
Suppose that one knows all moments $\z=(z_\alpha)$, $\alpha\in\N^n$, of a finite Borel measure $\mu$ 
on $\K$, i.e.,
\[z_\alpha\,=\,\int_\K \x^\alpha\,d\mu(\x),\qquad\forall\alpha\in\N^n,\]
whose support is exactly $\K$.
For every $k\in\N$ and $p\in\R[\x]$, let $\M_k(p,\,\z)$ be the localizing matrix with respect to the polynomial $p$ and the moment sequence $\z$, that is, 
$\M_k(p,\,\z)$ is the $s(k)\times s(k)$ real symmetric matrix with rows and columns indexed in the
canonical basis $(\x^\alpha)$, $\alpha\in\N^n_k$, of $\R[\x]_k$, and with entries
\begin{eqnarray*}
\M_k(p,\,\z)(\alpha,\beta)&=&\int_\K p(\x)\,\x^{\alpha+\beta}\,d\mu(\x),\qquad\forall\alpha,\beta\in\N^n_k\\
&=&\sum_\gamma p_\gamma\,z_{\alpha+\beta+\gamma}
\end{eqnarray*}
(when $p(\x)=\sum_\gamma p_\gamma\x^\gamma$).
We recall the following result.
\begin{lemma}(\cite[Theorem 3.2]{lasserresiopt})
\label{newlooklemma}
Let $\K\subset\R^n$ be compact and let $\mu$ be a finite Borel measure with
support $\K$ and with moments $\z=(z_\alpha)$, $\alpha\in\N^n$. Then $p\in\R[\x]$ is nonnegative on $\K$ if and only if $\M_k(p,\,\z)\succeq0$ for every $k=0,1,\ldots$.
\end{lemma}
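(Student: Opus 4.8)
The plan is to prove the two implications through a single computational identity. For any polynomial $f\in\R[\x]_k$ with coefficient vector $\f=(f_\alpha)$, expanding $f(\x)^2=\sum_{\alpha,\beta}f_\alpha f_\beta\,\x^{\alpha+\beta}$ and invoking the definition $\M_k(p,\z)(\alpha,\beta)=\int_\K p\,\x^{\alpha+\beta}\,d\mu$ gives
\[
\langle\f,\M_k(p,\z)\,\f\rangle\,=\,\int_\K p(\x)\,f(\x)^2\,d\mu(\x).
\]
From this identity the necessity direction ($\Rightarrow$) is immediate: if $p\geq0$ on $\K$, then because $\mu$ is supported on $\K$ and $f^2\geq0$, the right-hand side is nonnegative for every $f$, whence $\M_k(p,\z)\succeq0$ for all $k$.

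For the sufficiency direction ($\Leftarrow$) I would argue by contradiction. Suppose $\M_k(p,\z)\succeq0$ for every $k$, yet $p(\x_0)<0$ at some $\x_0\in\K$. By continuity of $p$ choose an open ball $B$ about $\x_0$ and a constant $\delta>0$ with $p\leq-\delta$ on $B$, together with a smaller ball $B'\subset B$ containing $\x_0$. Since $\supp\mu=\K$ and $\x_0\in\K$, the smaller ball carries positive mass, $\mu(B')>0$; and since $\K$ is compact, $p$ is bounded there, say $|p|\leq M$. The strategy is to exhibit one polynomial $q$ whose $L^2(\mu)$-mass is so strongly concentrated on $B$ that $\int_\K p\,q^2\,d\mu<0$, which through the identity above contradicts $\langle\q,\M_k(p,\z)\,\q\rangle\geq0$ taken at $k=\deg q$.

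To build $q$, pick a continuous function $\chi:\K\to[0,1]$ equal to $1$ on $\overline{B'}$ and to $0$ on $\K\setminus B$. Because $\K$ is compact, the Stone--Weierstrass theorem yields a polynomial $q$ with $\|q-\chi\|_\infty<\epsilon$ on $\K$, so that $\int_B q^2\,d\mu\geq(1-\epsilon)^2\mu(B')$ while $\int_{\K\setminus B}q^2\,d\mu\leq\epsilon^2\mu(\K)$. Splitting the integral over $B$ and $\K\setminus B$ and using $p\leq-\delta$ on $B$ and $|p|\leq M$ off $B$,
\[
\int_\K p\,q^2\,d\mu\,\leq\,-\delta(1-\epsilon)^2\mu(B')+M\epsilon^2\mu(\K),
\]
which is strictly negative once $\epsilon$ is small enough, using $\mu(B')>0$. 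This contradiction forces $p\geq0$ on $\K$.

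The main obstacle is exactly this concentration step in the sufficiency direction: polynomials are never compactly supported, so one cannot simply test $\M_k(p,\z)$ against an indicator of the bad region $B$. The remedy is to uniformly approximate a bump function on the compact set $\K$ and to rely on the full-support hypothesis $\supp\mu=\K$ to guarantee $\mu(B')>0$, which is precisely what keeps the negative contribution from being drowned out. This hypothesis is indispensable: without full support, $p$ could be negative on a $\mu$-null portion of $\K$ while every localizing matrix remains positive semidefinite.
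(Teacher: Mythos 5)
Your proof is correct, but it does not follow the route behind the paper's statement: the paper itself offers no proof at all, quoting the lemma directly from \cite[Theorem 3.2]{lasserresiopt}, and the proof given there is of a different nature. In that reference the result is established for \emph{closed} (possibly unbounded) supports under a Carleman-type growth condition on the moments, and the sufficiency direction runs through the moment problem: positive semidefiniteness of all localizing matrices makes the sequence $\left(\int_\K \x^\alpha p\,d\mu\right)_\alpha$ a legitimate moment sequence of some positive Borel measure, a determinacy argument identifies that measure with the signed measure $p\,d\mu$, hence $p\geq0$ $\mu$-almost everywhere, and continuity of $p$ together with $\supp\mu=\K$ upgrade this to $p\geq0$ on all of $\K$. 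Your argument shares the same starting identity $\langle\f,\M_k(p,\z)\f\rangle=\int_\K p\,f^2\,d\mu$ and the same (trivial) necessity direction, but replaces the determinacy machinery by a Stone--Weierstrass concentration argument: approximate a Urysohn bump supported near the bad point and feed it into the quadratic form. This is more elementary and entirely self-contained, but it exploits compactness in two essential ways (density of polynomials in $C(\K)$ and boundedness of $p$ on $\K$), so it does not extend to the unbounded setting that the cited theorem also covers; conversely, for the compact case needed here it avoids any appeal to moment-problem determinacy. One cosmetic point: for the Urysohn/Tietze step you should choose $B'$ with $\overline{B'}\subset B$ so that $\overline{B'}$ and $\K\setminus B$ are disjoint closed sets; this is implicit in your construction and costs nothing.
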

In view of Lemma \ref{newlooklemma}, a natural idea is to relax the ``difficult" constraint
$1-g\in C_{2d}(\K)$ in (\ref{minvolume}) to $\M_k(1-g,\,\z)\succeq0$ for fixed $k$, and then let $k\to\infty$. Indeed, for every fixed $k$, the latter is much easier to handle
as it defines a spectrahedron\footnote{A spectrahedron is a convex set that can be formed by intersecting 
the cone of positive semidefinite matrices $\s_n$ with a linear affine subspace.} $\Delta_k\subset\R^{\ell(2d)}$ on the coefficients of the homogeneous polynomial $g$. 
And so one obtains a hierarchy of convex relaxations of $\mathcal{P}$ by minimizing the (strictly) convex function 
$g\mapsto \int\exp(-g)$ on the spectrahedra $\Delta_k$, $k\in\N$,
which yields a monotone nondecreasing sequence of {\it lower bounds} $\rho_k\leq\rho$, $k\in\N$, on $\rho$. Of course the larger $k$ (i.e., the larger the size of the localizing matrix $\M_k(1-g,\,\z)$) the better the lower bound 
$\rho_k$ (but also the harder the problem).

\begin{thm}
\label{th-inner}
Let $\K\subset\R^n$ be compact with nonempty interior and consider the finite-dimensional convex optimization problem:
\begin{equation}
\label{minvolume-d}
\mathcal{P}_k:\qquad \rho_k=\dis\inf_{g\in\P[\x]_{2d}}\:\left\{\,\int_{\R^n} \exp(-g)\,d\x\::\: \M_k(1-g,\,\z)\succeq0\,\right\}.
\end{equation}
The sequence $(\rho_k)$, $k\in\N$, is monotone nondecreasing with $\rho_k\leq\rho$ and:

{\rm (a)} If $\mathcal{P}_k$ has an optimal solution $g^*_k\in\P[\x]_{2d}$ then
there exists a SOS polynomial $\sigma^*_k\in\Sigma[\x]_k$ such that
\begin{eqnarray}
\label{dualpb1}
\int_{\R^n}\x^\alpha\,\exp(-g^*_k)\,d\x&=&\int_\K\x^\alpha\,\sigma^*_k\,d\mu,\qquad \forall \vert\alpha\vert=2d.\\
\label{dualpb2}
\int_\K(1-g^*_k)\,\sigma^*_k\,d\mu&=&0,
\end{eqnarray}
and $\rho_k=\frac{2d}{n}\dis\int_\K\sigma^*_kd\mu$.\\

{\rm (b)} Conversely if (\ref{dualpb1})-(\ref{dualpb2}) holds for some $g^*_k\in\P[\x]_{2d}$
and some $\sigma^*_k\in\Sigma[\x]_k$ then $g^*_k$ is an optimal solution of $\mathcal{P}_k$.\\

{\rm (c)} If in addition,
\[\sup_k\,\sup_{\vert\alpha\vert=2d}\:\vert g^*_{k\alpha}\,\vert\,<\,M,\]
for some $M>0$, then $\rho_k\to\rho$ and $\mathcal{P}$ has an optimal solution $g^*$.
\end{thm}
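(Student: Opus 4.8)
The plan is to prove Theorem~\ref{th-inner} in the order (a), (b), (c), establishing the duality/KKT characterization first and then the convergence of the hierarchy. For parts (a) and (b) I would mimic the argument of Theorem~\ref{vol-suff-cond}: the constraint $\M_k(1-g,\z)\succeq0$ is an affine-in-$g$ linear matrix inequality defining the spectrahedron $\Delta_k$, so $\mathcal{P}_k$ is the minimization of the strictly convex function $g\mapsto\int_{\R^n}\exp(-g)\,d\x$ over a convex set. First I would verify that a Slater point exists for $\mathcal{P}_k$: taking $\theta:=M^{-1}\Vert\x\Vert^{2d}$ with $M$ large makes $1-\theta>0$ on $\K$, and since $\mu$ has support exactly $\K$ with nonempty interior, Lemma~\ref{newlooklemma} (or a direct computation) shows $\M_k(1-\theta,\z)\succ0$, giving strict feasibility. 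With Slater's condition in force, the KKT conditions are both necessary and sufficient. Writing the Lagrange multiplier for the LMI as a positive semidefinite matrix $\X\succeq0$ indexed by $\N^n_k$, and recalling from Lemma~\ref{lemma-aux2}(b) that $\partial/\partial g_\alpha\int\exp(-g)\,d\x=-\int\x^\alpha\exp(-g)\,d\x$, stationarity in $g_\alpha$ yields
\[
\int_{\R^n}\x^\alpha\exp(-g^*_k)\,d\x\,=\,\langle\X,\,\partial\M_k(1-g,\z)/\partial g_\alpha\rangle,
\]
and the right-hand side is exactly $\int_\K\x^\alpha\sigma^*_k\,d\mu$ once one sets $\sigma^*_k(\x):=\v_k(\x)^T\X\,\v_k(\x)\in\Sigma[\x]_k$, because $\M_k(p,\z)(\alpha,\beta)=\int_\K p\,\x^{\alpha+\beta}\,d\mu$. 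Complementary slackness $\langle\X,\M_k(1-g^*_k,\z)\rangle=0$ is precisely $\int_\K(1-g^*_k)\sigma^*_k\,d\mu=0$, giving \eqref{dualpb1}--\eqref{dualpb2}. The value formula $\rho_k=\frac{2d}{n}\int_\K\sigma^*_k\,d\mu$ then follows by contracting \eqref{dualpb1} against the coefficient vector of $g^*_k$ (all of whose support is in degree $2d$), using Euler's identity $\sum_\alpha g^*_{k\alpha}\x^\alpha=g^*_k$ together with the homogeneity relation \eqref{coroeuler-1} of Lemma~\ref{lemma4}, which gives $\int g^*_k\exp(-g^*_k)\,d\x=\frac{n}{2d}\int\exp(-g^*_k)\,d\x=\frac{n}{2d}\rho_k$; combining with \eqref{dualpb2} evaluated via \eqref{dualpb1} yields the stated identity. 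Part (b) is then immediate: sufficiency of KKT under Slater makes any pair satisfying \eqref{dualpb1}--\eqref{dualpb2} optimal.

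For part (c), the monotonicity $\rho_k\le\rho_{k+1}\le\rho$ is clear since each $\M_{k+1}(1-g,\z)\succeq0$ implies $\M_k(1-g,\z)\succeq0$ (the smaller localizing matrix is a principal submatrix) and since every $g$ feasible for $\mathcal{P}$ is feasible for every $\mathcal{P}_k$ by Lemma~\ref{newlooklemma}. The plan for convergence is a compactness-plus-closedness argument. Under the uniform coefficient bound $\sup_k\sup_{\vert\alpha\vert=2d}\vert g^*_{k\alpha}\vert<M$, the optimal solutions $g^*_k$ live in a bounded subset of the finite-dimensional space $\R[\x]_{2d}$, so a subsequence $g^*_{k_j}$ converges to some $g^*$ with coefficients bounded by $M$. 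First I would argue $g^*\in\P[\x]_{2d}$, i.e.\ that $g^*$ is a legitimate homogeneous polynomial of degree $2d$ with compact sublevel set and the limit is not degenerate; here one uses that the objective $\int\exp(-g^*_k)\,d\x=\Gamma(1+n/2d)\,\vol(\{g^*_k\le1\})$ stays bounded (it is $\le\rho$), which prevents the sublevel sets from blowing up. Next, feasibility passes to the limit: for each fixed $k$, once $k_j\ge k$ we have $\M_k(1-g^*_{k_j},\z)\succeq0$, and letting $j\to\infty$ gives $\M_k(1-g^*,\z)\succeq0$ for every $k$, whence $1-g^*\in C_{2d}(\K)$ by Lemma~\ref{newlooklemma}, so $g^*$ is feasible for $\mathcal{P}$ and $\int\exp(-g^*)\,d\x\ge\rho$. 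Finally, continuity of the objective under coefficientwise convergence (via dominated convergence, dominated by $\exp(-g^*_{k_j})\le\exp(\text{bound})$ on compacta together with the uniform tail control coming from the bounded sublevel sets) gives $\int\exp(-g^*)\,d\x=\lim_j\rho_{k_j}\le\rho$, so equality holds, $\rho_{k}\to\rho$, and $g^*$ attains the minimum.

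The main obstacle I anticipate is part (c), specifically two interlocking analytic points: showing the limit $g^*$ genuinely belongs to $\P[\x]_{2d}$ (bounded coefficients alone do not guarantee the limit still has \emph{compact} sublevel set, since positive-definiteness on the sphere could fail in the limit), and justifying $\int\exp(-g^*_{k_j})\,d\x\to\int\exp(-g^*)\,d\x$ uniformly enough that no mass escapes to infinity. Both are controlled by the \emph{a priori} bound $\int\exp(-g^*_k)\,d\x\le\rho<\infty$, which via \eqref{b1} bounds the volumes $\vol(\{g^*_k\le1\})$ uniformly and thus keeps the integrals tight; I would make this rigorous by a Fatou/dominated-convergence splitting of $\R^n$ into a fixed large ball and its complement, using the uniform volume bound to control the tail. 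The KKT manipulations in (a) are routine once the correct form of the multiplier $\X\succeq0$ and its identification with the SOS certificate $\sigma^*_k$ is set up; the genuinely delicate step is the limiting argument that upgrades the hierarchy of lower bounds to an attained minimum.
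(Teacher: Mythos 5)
Your parts (a) and (b) follow the paper's proof essentially verbatim: the Slater point $M^{-1}\Vert\x\Vert^{2d}$, the KKT conditions with a positive semidefinite multiplier identified with an SOS polynomial $\sigma^*_k$ (your $\v_k(\x)^T\X\,\v_k(\x)$ is the paper's $\sum_j(q_j^T\v_k(\x))^2$ after spectral decomposition of the multiplier), and the value formula $\rho_k=\frac{2d}{n}\int_\K\sigma^*_k\,d\mu$ obtained by contracting stationarity against the coefficients of $g^*_k$ and invoking \eqref{coroeuler-1} together with complementary slackness. Your way of passing feasibility to the limit in (c) is also sound, and in fact cleaner than the paper's: the entries of $\M_j(1-g,\z)$ are affine in the coefficients of $g$, so coefficientwise convergence gives entrywise convergence of these matrices, and the cone of positive semidefinite matrices is closed; the paper instead runs a somewhat roundabout Fatou argument on $\int_\K f^2(1-g^*_{k_i})\,d\mu$ to reach the same conclusion, after which both arguments invoke Lemma \ref{newlooklemma}.

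The gap is in the last step of (c). You propose to prove the full limit $\int\exp(-g^*_{k_j})\,d\x\to\int\exp(-g^*)\,d\x$ by dominated convergence, with tail control ``coming from the bounded sublevel sets.'' That tightness claim is not justified: by polar coordinates and \eqref{b1}, the uniform bound $\rho_k\le\rho$ only says that $\int_{S^{n-1}}g^*_k(\theta)^{-n/2d}\,d\theta$ is bounded uniformly in $k$; it does not give uniform integrability of $g^*_k(\cdot)^{-n/2d}$ on the sphere, and the tail $\int_{\Vert\x\Vert>R}\exp(-g^*_k)\,d\x$ is governed precisely by the contribution of directions where $g^*_k$ is small, which a mere bound on the integrals does not control uniformly in $k$. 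So, as sketched, the no-escape-of-mass step would not go through. Fortunately it is also unnecessary: one-sided Fatou suffices, and this is exactly the paper's route. Pointwise convergence and nonnegativity give $\int\exp(-g^*)\,d\x\le\liminf_j\rho_{k_j}\le\rho$, while feasibility of $g^*$ for $\mathcal{P}$ gives $\int\exp(-g^*)\,d\x\ge\rho$; hence all inequalities are equalities, $\rho_k\to\rho$ by monotonicity, and $g^*$ is optimal. One caveat you rightly flag, and which the paper itself glosses over: feasibility of $g^*$ for $\mathcal{P}$ formally requires $g^*\in\P[\x]_{2d}$, i.e.\ compactness of $\{\x:g^*(\x)\le1\}$, which coefficientwise limits do not automatically preserve; neither your argument nor the paper's settles this point.
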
 
For a proof see \S \ref{proof-inner}. So problem $\mathcal{P}_k$ amounts to minimize a strictly convex function on a spectrahedron
of $\R^{\ell(2d)}$.
For instance, one may use {\it interior point} methods  and minimize the standard log-barrier function
\[g\mapsto \phi_\nu(g)\,:=\,\int_{\R^n}\exp(-g)d\x-\frac{1}{\nu}\log{\rm det}(\M_k(1-g,\,\z)),\]
with parameter $\nu$, and let $\nu\to\infty$. For more details on log-barrier methods, 
the reader is referred to e.g. Wright \cite{wright}.

\subsection{Upper bounds via outer approximations}

Let $\K\subset\R^n$ be the compact basic semi-algebraic set defined by 
\[\K\,=\,\{\x\::\: u_j(\x)\geq0,\:\quad j=1,\ldots,m\},\]
for some polynomials $(u_j)\subset\R[\x]$. As $\K$ is compact, with no loss of generality we may and
will suppose that $u_1(\x)=M-\Vert\x\Vert^2$ for some $M$ large enough.
Let
\[Q^*\,=\,\{ \sum_{j=0}^m\sigma_j u_j\::\: u_j\in\Sigma[\x],\:j=0,\ldots,m\},\]
be the quadratic module of $\R[\x]$ generated by the $u_j$'s (with $u_0=1$).
$Q^*$ is Archimedean because the quadratic polynomial
$\x\mapsto M-\Vert\x\Vert^2$ belongs to $Q^*$. If one defines
\[Q^*_k\,:=\,\{ \sum_{j=0}^m\sigma_j u_j\::\: u_j\in\Sigma[\x];\quad {\rm deg}\,\sigma_j u_j \leq 2k,\:j=0,\ldots,m\},\quad k\in\N,\]
(a subset of $Q^*$ with a degree bound on the SOS weights $\sigma_j$) 
then this time one may replace the ``difficult" constraint $1-g\in C_{2d}(\K)$ by the stronger constraint
$1-g\in Q^*_k$ for fixed $k\in\N$.

The convex cone $Q^*_k$ is the dual cone of the closed pointed convex cone
\begin{equation}
\label{cone-Q_k}
Q_k:=\{\y\in\R^{s(2k)}\::\:\M_k(\y)\,\succeq0,\quad \M_{k-v_j}(u_j,\,\y)\succeq0,\:j=1,\ldots,m\},\end{equation}
where $\M_k(\y)$ (resp. $\M_{k-v_j}(u_j,\,\y)$) is the moment matrix
associated with the sequence $\y$ (resp. the localizing matrix associated with
$\y$ and the polynomial $u_j$). Hence, $Q^*_k$ has a nonempty interior; see e.g. Rockafellar \cite{rockafellar}.

Then solving the problem
\begin{equation}
\label{minvolume-outer}
\mathcal{P}'_k:\quad\dis\inf_{g\in\P[\x]_{2d}}\:\left\{\int_{\R^n}\exp(-g)d\x\::\: 1-g\in Q^*_k\right\},\quad k\in\N,\end{equation}
$k\geq d$, now provides a monotone sequence of  {\it upper bounds} $\rho'_k\geq\rho$, $k\in\N$.

\begin{thm}
\label{th-outer}
Let $\K\subset\R^n$ be compact with nonempty interior and consider the finite-dimensional convex optimization problem
(\ref{minvolume-outer}), $k\geq d$. 

{\rm (a)} The sequence $(\rho'_k)$, $d\leq k\in\N$, is monotone nonincreasing with $\rho'_k\to \rho$ as $k\to\infty$.
Moreover there exists an optimal solution $g^*_k\in\P[\x]_{2d}$.

In addition, assume that there exists $g_0\in \P[\x]_{2d}$ such that
$1-g_0\in {\rm int}\,Q^*_{k_0}$ for some $k_0\geq d$. Then:

{\rm (b)} If $k\geq k_0$ and $g^*_k\in\P[\x]_{2d}$ is an optimal solution of $\mathcal{P}'_k$,
 there exists a
vector $\y^k\in Q_k$ such that:
\begin{equation}
\label{th-outer-1}
0\,=\,\langle 1-g^*_k,\y^k\rangle;\quad y^k_\alpha\,=\,\int_{\R^n}\x^\alpha\,\exp(-g^*_k)\,d\x,\quad\forall\,\vert\alpha\vert=2d.
\end{equation}
\indent
{\rm (c)} Conversely, if $k\geq k_0$ and $(\y^k,1-g^*_k)\in Q_k\times Q^*_k$ satisfy (\ref{th-outer-1}), then
$g^*_k$ is an optimal solution of $\mathcal{P}'_k$.

{\rm (d)} Let $(g^*_k)\subset\P[\x]_{2d}$, $k\geq k_0$, be a sequence of optimal solutions.
If
\[\sup_k\,\sup_{\vert\alpha\vert=2d}\:\vert g^*_{k\alpha}\,\vert\,\,<\,N,\]
for some $N>0$, then every accumulation point $g^*$ of the sequence 
$(g^*_k)$, $k\in\N$, is an optimal solution of  $\mathcal{P}$.
\end{thm}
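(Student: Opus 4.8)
The plan is to treat all four parts as a single convex-optimization package, exactly parallel to the analysis of $\mathcal{P}_k$ in Theorem~\ref{th-inner}, with the only genuinely new analytic ingredient being a Positivstellensatz argument for the convergence $\rho'_k\to\rho$. First I would record the easy structural facts. Since $Q^*_k\subseteq Q^*_{k+1}$, the feasible sets of the problems $\mathcal{P}'_k$ are nested and increasing, so $(\rho'_k)$ is nonincreasing; and since every element of $Q^*_k$ is nonnegative on $\K$, we have $Q^*_k\subseteq C_{2d}(\K)$, so any $g$ feasible for $\mathcal{P}'_k$ is feasible for $\mathcal{P}$, giving $\rho'_k\ge\rho$. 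For existence of an optimal $g^*_k$ I would show a minimizing sequence stays in a compact subset of $\P[\x]_{2d}$: feasibility forces $0\le g\le 1$ on $\K$ (nonnegativity because any $g\in\P[\x]_{2d}\subseteq C_{2d}$ is a positive definite form, and $g\le 1$ because $1-g\in Q^*_k\subseteq C_{2d}(\K)$); since $\K$ has nonempty interior, evaluating a degree-$2d$ form at ${n+2d-1\choose 2d}$ generic interior points is an injective linear map, so the coefficients are bounded. Passing to a convergent subsequence with limit $g^*$, Fatou's lemma makes $g\mapsto\int_{\R^n}\exp(-g)\,d\x$ lower semicontinuous, so $\int\exp(-g^*)\le\liminf\int\exp(-g^*_k)<\infty$; this forces $g^*$ to be positive definite (an indefinite limit would give an unbounded sublevel set and an infinite integral), hence $g^*\in\P[\x]_{2d}$, while closedness of $Q^*_k$ (guaranteed by the Archimedean ball generator $u_1=M-\Vert\x\Vert^2$) yields $1-g^*\in Q^*_k$, and lower semicontinuity gives optimality.

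The convergence $\rho'_k\to\rho$ is where I expect the main obstacle, and I would handle it by perturbation followed by Putinar's Positivstellensatz. Fix $g$ feasible for $\mathcal{P}$ with $\int\exp(-g)$ close to $\rho$, and set $\hat g:=g/(1+\epsilon)\in\P[\x]_{2d}$. On $\K$ one has $0\le g\le 1$, hence $1-\hat g\ge \epsilon/(1+\epsilon)>0$ on $\K$; because $Q^*$ is Archimedean, Putinar's theorem gives $1-\hat g\in Q^*$, and therefore $1-\hat g\in Q^*_k$ for all sufficiently large $k$, so $\rho'_k\le\int\exp(-\hat g)$. Letting $\epsilon\downarrow 0$ (using continuity of $g\mapsto\int\exp(-g)$) and then letting $g$ approach optimality yields $\limsup_k\rho'_k\le\rho$, which combined with $\rho'_k\ge\rho$ proves $\rho'_k\to\rho$. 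This step is precisely what distinguishes a convergent outer scheme from a naive tightening.

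Parts (b) and (c) are the standard Karush--Kuhn--Tucker package for a convex program with one conic constraint. The Slater point $1-g_0\in{\rm int}\,Q^*_{k_0}$ guarantees, for $k\ge k_0$, strong duality and existence of a multiplier $\y^k$ in the dual cone $Q_k=(Q^*_k)^*$; here the optimum $g^*_k$ lies in the interior of $\dom$ of the objective since it is positive definite, so the objective is differentiable there. Using $\partial_{g_\alpha}\int\exp(-g)\,d\x=-\int\x^\alpha\exp(-g)\,d\x$ from Corollary~\ref{coro-convex}, stationarity reads $y^k_\alpha=\int\x^\alpha\exp(-g^*_k)\,d\x$ for $\vert\alpha\vert=2d$ and complementary slackness reads $\langle 1-g^*_k,\y^k\rangle=0$, which is exactly (\ref{th-outer-1}). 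Conversely, given $(\y^k,1-g^*_k)\in Q_k\times Q^*_k$ satisfying (\ref{th-outer-1}), convexity of the objective together with these two identities gives, for any feasible $g$, the chain $\int\exp(-g)-\int\exp(-g^*_k)\ge\langle\y^k,(1-g)-(1-g^*_k)\rangle=\langle\y^k,1-g\rangle\ge 0$, the last inequality because $1-g\in Q^*_k$ and $\y^k\in Q_k$; hence $g^*_k$ is optimal, proving sufficiency.

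Finally, for (d) I would combine compactness with the convergence from part (a). The uniform bound $\sup_{k,\alpha}\vert g^*_{k\alpha}\vert<N$ places the sequence $(g^*_k)$ in a compact subset of $\R^{\ell(2d)}$, so an accumulation point $g^*$ exists along some subsequence $k_i\to\infty$. Since each $g^*_{k_i}\le 1$ on $\K$, the pointwise limit satisfies $g^*\le 1$ on $\K$, i.e. $1-g^*\in C_{2d}(\K)$, so $g^*$ is feasible for $\mathcal{P}$; moreover lower semicontinuity of the objective together with $\int\exp(-g^*_{k_i})=\rho'_{k_i}\to\rho$ gives $\int\exp(-g^*)\le\rho$, while finiteness of this integral again forces $g^*\in\P[\x]_{2d}$. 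Feasibility of $g^*$ for $\mathcal{P}$ yields the reverse inequality $\int\exp(-g^*)\ge\rho$, so $\int\exp(-g^*)=\rho$ and $g^*$ is an optimal solution of $\mathcal{P}$, as claimed.
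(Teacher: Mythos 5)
Your proposal tracks the paper's own proof in all the load-bearing steps: monotonicity of $(\rho'_k)$ from $Q^*_k\subseteq Q^*_{k+1}$, the scaling perturbation $g\mapsto (1+\epsilon)^{-1}g$ followed by Putinar's Positivstellensatz to get $\rho'_k\to\rho$, Slater's condition plus the KKT conditions for (b) and (c) (you spell out the sufficiency direction that the paper only asserts, and your gradient/complementarity chain is correct), and pointwise limits plus Fatou's lemma for (d). The one place you genuinely depart from the paper is the existence of a minimizer of $\mathcal{P}'_k$: you bound the coefficients of a minimizing sequence directly, via $0\le g\le 1$ on $\K$ and injectivity of evaluation at ${n+2d-1\choose 2d}$ generic points of ${\rm int}\,\K$, and then invoke closedness of $Q^*_k$ as a known fact. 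The paper instead bounds the Gram matrices of the Putinar certificates of $1-g_n$ by integrating against Lebesgue measure on $\K$ (using that the moment and localizing matrices are positive definite because $\K$ has nonempty interior) and extracts a limit certificate. Your compactness argument is cleaner, but be aware that closedness of the truncated quadratic module $Q^*_k$ is exactly what the paper's Gram-matrix argument proves by hand: it is true but nontrivial (it requires the nonempty-interior hypothesis, or the explicit ball generator $u_1=M-\Vert\x\Vert^2$), so it must be proved or cited, not asserted in passing.

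There is, however, one concretely false step, and you rely on it twice. You claim that finiteness of $\int_{\R^n}\exp(-g^*)\,d\x$ forces the limit form $g^*$ to be positive definite, because ``an indefinite limit would give an unbounded sublevel set and an infinite integral.'' An unbounded sublevel set does \emph{not} imply an infinite integral. Take $n=2$ and $g^*(x,y)=x^2y^2(x^2+y^2)$: this is a nonnegative form of degree $6$ vanishing on both coordinate axes, so $\{(x,y):g^*(x,y)\le 1\}$ is unbounded and $g^*\notin\P[\x]_{6}$; yet $\int_{\R^2}\exp(-g^*)\,dx\,dy<\infty$, since the inner integral $\int_\R\exp(-g^*(x,y))\,dy$ is of order $\vert x\vert^{-1/2}$ as $x\to0$ and of order $x^{-2}$ as $\vert x\vert\to\infty$, both integrable. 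Consequently, lower semicontinuity plus finiteness of the limiting objective does not place the limit back inside $\P[\x]_{2d}$, and this is precisely the step needed both for existence of $g^*_k\in\P[\x]_{2d}$ in (a) and for feasibility of the accumulation point $g^*$ for $\mathcal{P}$ in (d) (feasibility for $\mathcal{P}$ requires $g^*\in\P[\x]_{2d}$, not merely $1-g^*\in C_{2d}(\K)$). To be fair, the paper's proof has the same lacuna --- it never verifies that the limits it constructs have compact sublevel sets --- but it does not assert the false implication, whereas your proof makes it an explicit load-bearing claim; a complete argument would need a genuine additional idea to exclude minimizing sequences degenerating to the boundary of $\P[\x]_{2d}$.
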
 
For proof see \S \ref{proof-outer}. 

\section{Proofs}

\subsection{Proof of Theorem \ref{th-inner}}
\label{proof-inner}
\begin{proof}
(a) That the sequence $(\rho_k)$ is monotone non decreasing is straightforward since the constraints of $\mathcal{P}_k$
are more and more restrictive as $k$ increases. Next, as $\K$ is compact there exists $M,\delta>0$ such that $M-\Vert\x\Vert^{2d}\,>\delta$ for all $\x\in\K$.
With $g_0\in\Sigma[\x]_d$ being the polynomial $\x\mapsto M^{-1}\Vert \x\Vert^{2d}$, one has 
$1-g_0(\x)>\delta/M$ for all $\x\in\K$ and so $\M_k(1-g_0,\,\z)\succ0$. Indeed, for every $0\neq f\in\R[\x]_k$,
\[\langle \f,\M_k(1-g_0,\,\z)\,\f\rangle\,=\,\int_\K f^2(1-g_0)\,d\mu\,\geq\,\frac{\delta}{M}\,\int_\K f^2d\mu\,>\,0.\]
where the last inequality is because $\K$ has nonempty interior and ${\rm supp}\,\mu=\K$.
Observe also that $g_0\in\P[\x]_{2d}$ and so $g_0$ is a strictly feasible solution
of $\mathcal{P}_k$, that is, Slater's condition\footnote{For a convex optimization
$\inf_\x\{f(\x)\,:\,h_k(\x)\geq0,\:k=1,\ldots,m\}$ Slater's condition holds if there exists $\x_0$ such that
$h_k(\x_0)>0$ for all $k$. In this case the Karush-Kuhn-Tucker (KKT) optimality
conditions are necessary and sufficient.} holds for $\mathcal{P}_k$. So 
the Karush-Kuhn-Tucker (KKT) optimality conditions at a point
$g^*_k$ are necessary and sufficient for $g^*_k$ to be a (global) minimizer of $\mathcal{P}_k$. Therefore, there exists $0\preceq\Delta\in\s_{s(k)}$ such that:
\begin{equation}
\label{aaux1}
\int_{\R^n}-\x^\alpha\exp(-g^*_k)\,d\x+\langle \Delta,\M_k(\x^\alpha,\,\z)\rangle\,=\,0,\qquad \forall \vert\alpha\vert=2d,
\end{equation}
and
\begin{equation}
\label{aux2}
\langle \M_d(1-g^*_k,\,\z),\Delta\rangle\,=\,0,\end{equation}
where one has written $\M_k(g^*_k\,\z)=\sum_\alpha g^*_{k\alpha}\,\M_k(\x^\alpha,\,\z)$ (with $\M_k(\x^\alpha,\,\z)$
being the localizing matrix with respect to the polynomial $\x^\alpha$ and the moment sequence $\z$).
From its spectral decomposition, $\Delta=\sum_jq_jq_j^T$ for some vectors  $(q_j)$, which yields
\[\langle\Delta,\M_k(\x^\alpha,\,\z)\rangle\,=\,\int_\K\sigma^*_k(\x)\,\x^\alpha\,d\mu,\qquad\forall\vert\alpha\vert=2d,\]
where $\sigma^*_k=\sum_j(q_j^T\v_k(\x))^2\in\Sigma_k$ is a SOS polynomial.
And so (\ref{aaux1}) yields (\ref{dualpb1}),
and the complementary condition (\ref{aux2}) yields (\ref{dualpb2}).
Next, multiplying both sides of (\ref{aaux1}) with $g^*_{k\alpha}$ and summing up yields:
\begin{eqnarray*}
\int_{\R^n}g^*_k\,\exp(-g^*_k)\,d\x\:\left(=\,\frac{n}{2d}\int_{\R^n} \exp(-g^*_k)\,d\x\right)&=&\langle \Delta,\M_k(g^*_k,\,\z)\rangle\\
&=&\int_\K\sigma^*_k\,g^*_k\,d\mu,\\
&=&\int_\K\sigma^*_k\,d\mu \quad\mbox{[by (\ref{aux2})]}
\end{eqnarray*}

(b) The converse is because under Slater's condition, the KKT optimality conditions are also sufficient for $g^*_k$ to be a minimizer.

(c) Write $g^*_k(\x)=\sum_{\vert\alpha\vert=2d}g^*_{k\alpha}\x^\alpha$.
As $\sup_k\sup_{\vert\alpha\vert=2d}\vert g^*_{k\alpha}\vert<M$, there exists a subsequence $(k_i)$, $i\in\N$, 
and a vector $g^*\in\R^{\ell(2d)}$ such that
for every $\alpha\in\N^n$ with $\vert\alpha\vert=2d$, $g^*_{k_i\alpha}\to g^*_\alpha$ as $i\to\infty$.

Notice that $\x\mapsto g^*(\x)=\limsup_{i\to\infty}g^*_{k_i}(\x)=\liminf_{i\to\infty}g^*_{k_i}(\x)$.
Next, as $\rho\geq \rho_k$ for every $k$,  and
$\exp(-g^*_k)\geq0$, by Fatou's Lemma (see e.g. Ash \cite{ash})
\begin{eqnarray*}
\rho\,\geq\,\liminf_{i\to\infty}\rho_{k_i}&=&\liminf_{i\to\infty}\int_{\R^n} \exp(-g^*_{k_i})\,d\x\\
&\geq&\int_{\R^n}\liminf_{i\to\infty} \exp(-g^*_{k_i})\,d\x\\
&=&\int_{\R^n}\exp(-\limsup_{i} g^*_{k_i})\,d\x=
\int_{\R^n}\exp(-g^*)\,d\x.
\end{eqnarray*}
On the other hand, observe that for every $k\in\N$, $\M_k(1-g^*_k,\,\z)\succeq0$ implies
$\M_j(1-g^*_k,\,\z)\succeq0$ for all $j\leq k$. Hence, let $j$ be fixed arbitrary so that
$\M_j(1-g^*_k,\,\z)\succeq0$, for all sufficiently large $k$.
For every $f\in\R[\x]_{j}$,  $f^2g_k$ is uniformly bounded on $\K$
and so by Fatou's Lemma
\begin{eqnarray*}
0\leq\limsup_{i\to\infty}\int_{\K} f^2\,(1-g^*_{k_i})\,d\mu&=&-\liminf_{i\to\infty}\int_{\K}f^2\,(g^*_{k_i}-1)\,d\mu\\
&\leq&\int_\K -\liminf_{i\to\infty}f^2\,(g^*_{k_i}-1)\,d\mu\\
&=&\int_{\K}f^2\,(1-g^*)\,d\mu
.\end{eqnarray*}
As $f\in\R[\x]_j$ was arbitrary, $\M_j(1-g^*,\,\z)\succeq0$; and as $j$ was arbitrary,
$\M_j(1-g^*,\,\z)\succeq0$, for all $j=0,1,\ldots$ But by Lemma \ref{newlooklemma},
this implies $1-g^*\geq0$ on $\K$, and so $g^*$ is a feasible solution for $\mathcal{P}$. 
Combining with $\rho\geq \int_{\R^n}\exp(-g^*)d\x$ yields that $g^*$ is an optimal solution of $\mathcal{P}$.
\end{proof}

\subsection{Proof of Theorem \ref{th-outer}}
\label{proof-outer}

\begin{proof}
(a) The sequence $(\rho'_k)$ is monotone non increasing because $Q^*_k\subset Q^*_{k+1}$ for every $k\in\N$.
Next, let $\epsilon>0$ be fixed, and let $g\in \P[\x]_{2d}$ be such that $\rho\leq \int_{\R^n}\exp(-g)d\x\leq \rho+\epsilon$.
Observe that $\tilde{g}:=(1+\epsilon)^{-1}g\in\P[\x]_{2d}$ and
\begin{eqnarray*}
\int_{\R^n}\exp(-\tilde{g})\,d\x&=&\int_{\R^n}\exp(-(1+\epsilon)^{-1}g)\,d\x\\
&=&(1+\epsilon)^{n/d}\int_{\R^n}\exp(-g)\,d\x\leq (1+\epsilon)^{n/d}(\rho+\epsilon).\end{eqnarray*}
 and $1-\tilde{g}=\frac{1+\epsilon -g}{1+\epsilon}>0$ on $\K$.
 Therefore, as $Q^*$ is Archimedean, by Putinar's Positivstellensatz \cite{putinar},
 $1-\tilde{g}\in Q^*$, that is, $1-\tilde{g}\in Q^*_{k_1}$  for some $k_1$. Hence
 $\tilde{g}$ is a feasible solution of $\mathcal{P}_{k_1}$ which implies
 $\rho\leq \rho'_{k_1}\leq (1+\epsilon)^{n/d}(\rho+\epsilon)$.
 As the sequence is monotone and $\epsilon>0$ was arbitrary, we obtain the desired convergence 
 $\rho'_k\to\rho$.
 
 Next, let $\w=(w_\alpha)$, $\alpha\in\N^n$, be the sequence of moment of the Lebesgue measure
 on $\K$, and let $g\in\R[\x]_{2d}$ be an arbitrary  feasible solution.
 Let $\M_k(\w)$ (resp. $\M_{k-v_j}(u_j\,\w)$) be the moment (resp. localizing) matrix associated with $\w$ and $u_j$.
 Hence, $\M_d(\w)\succ0$ and $\M_{d-v_j}(u_j\,\w)\succ0$, $j=1,\ldots,m$, because $\K$ has nonempty interior.
 The constraint $1-g\in Q_k(u)$ which reads $(1-g)=\sigma_0+\sum_{j=1}^m\sigma_ju_j$ for some SOS polynomials $(\sigma_j)$
 has the equivalent form
 \begin{equation}
 \label{aux33}
 (1-g)_\alpha\,=\,\langle \X_0,\B_\alpha\rangle+\sum_{j=1}^m\langle\X_j,\C_{j\alpha}\rangle,\quad\forall\alpha\in\N^n_{2k},\end{equation}
 for some appropriate real symmetric $s(2(k-v_j))\times s(2(k-v_j))$ matrices $\X_j\succeq0$,
(and where $\B_\alpha,\C_{j\alpha}$ are given real symmetric matrices).
The vector of coefficients of the SOS polynomial $\sigma_j\in\Sigma[\x]$ is obtained from
the eigenvectors of $\X_j$, $j=0,\ldots,m$. For more details see e.g. \cite{lasserrebook2}.
Multiplying (\ref{aux33}) by $w_\alpha$ and summing up yield:
\begin{eqnarray*}
\int_\K(1-g)\,d\x&=&\langle \X_0,\sum_\alpha\B_\alpha w_\alpha\rangle
+\sum_{j=1}^m\langle \X_j,\sum_\alpha\C_{j\alpha}w_\alpha\rangle\\
&=&\langle \X_0,\M_k(\w)\rangle+\sum_{j=1}^m\langle \X_j,\M_{k-v_j}(u_j\,\w)\rangle\end{eqnarray*}
Observe that every feasible solution $g\in\R[\x]_{2d}$ is nonnegative otherwise $\int\exp(-g)dx$ is not bounded.
And so $\exp(-g)\geq 1-g$ on $\K$ because $g\geq0$. Therefore, for any minimizing sequence 
$(g_n,\X_j^n)$ of (\ref{minvolume-outer}),
\[\int_\K(1-g_n)\,d\x\,\leq\,\int_\K\exp(-g_n)\,d\x\,\leq \,\int_{\R^n}\exp(-g_n)\,d\x\,\leq\,\int_{\R^n}\exp(-g_0)\,d\x.\]
And so
\[\sup_n\left\{\langle \X^n_0,\M_k(\w)\rangle+\sum_{j=1}^m\langle \X^n_j,\M_{k-v_j}(u_j\,\w)\rangle\right\}\,\leq\,\int_{\R^n}\exp(-g_0)\,d\x.\]
As $\M_k(\w),\M_{k-v_j}(u_j\,\w)\succ0$ and $\X^n_j\succeq0$, $j=1,\ldots,m$, 
all matrices $\X^n_j$ are uniformly bounded. Hence one may extract a subsequence $(n_\ell)$ such that
$\X_j^{n_\ell}\to \X_j^*\succeq0$, $j=0,\ldots,n$, as $\ell\to\infty$. And so for every $\alpha\in\N^n_{2k}$,
\[\langle \X^*_0,\B_\alpha\rangle+\sum_{j=1}^m\langle\X^*_j,\C_{j\alpha}\rangle\,=\,
\lim_{\ell\to\infty}(1-g_{n_\ell})_\alpha\,=:\,(1-g^*)_\alpha,\]
for some homogeneous polynomial $g^*\in\R[\x]_{2d}$ (as all coefficients $g^*_\alpha$ with $\vert\alpha\vert\neq 2d$ vanish).
In other words $(g_{n_\ell})_\alpha\to g^*_\alpha$ as $\ell\to\infty$, for all $\alpha\in\N^n_{2k}$.
As $(1-g_{n_\ell})\geq0$ on $\K$ one also has $(1-g^*)\geq0$ on $\K$. Finally, since we also have 
the pointwise convergence $g_{n_\ell}(\x)\to g^*(\x)$ for all $\x\in\R^n$, invoking Fatou's lemma yields
\begin{eqnarray*}
\rho'_k\,=\,\lim_{\ell\to\infty}\int_{\R^n}\exp(-g_{n_\ell})\,d\x&\geq&\int_{\R^n}\liminf_{\ell\to\infty}\exp(-g_{n_\ell})\,d\x\\
&=&\int_{\R^n}\exp(-g^*)\,d\x,\end{eqnarray*}
which proves that $g^*$ is an optimal solution of (\ref{minvolume-outer}).

(b) $1-g_0\in {\rm int}\,Q^*_{k}$ for all $k\geq k_0$ because $1-g_0\in {\rm int}\,Q^*_{k_0}$ and
$Q^*_k\supset Q^*_{k_0}$. Hence Slater's condition holds for $\mathcal{P}'_k$ whenever $k\geq k_0$.
Therefore, if $g^*_k$ is an optimal solution of $\mathcal{P}_k$, there exists $\y^k\in Q_k$ such that the KKT-optimality condition hold, which yields (\ref{th-outer-1}).

(c) Follows from the fact that under Slater's condition, 
the KKT-optimality conditions are sufficient for $g^*_k$ to be an optimal solution of $\mathcal{P}'_k$.

(d) If $\sup_k\,\sup_{\vert\alpha\vert=2d}\:\vert g^*_{k\alpha}\,\vert<N$, let $g^*$ be an accumulation point,
i.e., a limit point of some subsequence $(g^*_{k_i})$, $i\in\N$, i.e.,
such  that for every $\alpha$ with $\vert\alpha\vert=2d$, $g^*_{k_i\alpha}\to g^*_\alpha$ as $i\to\infty$.
Let $g^*\in\R[\x]_{2d}$ be the homogeneous polynomial with coefficients $g^*_\alpha$, $\vert\alpha\vert=2d$.
For every $\x\in\K$, $(1-g_{k_i}(\x))\geq0$ because $1-g^*_{k_i}\in Q^*_{k_i}$ for every $i$. Therefore, with $\x\in\K$ fixed, \[0\leq 1-\lim_{i\to\infty}g^*_{k_i}(\x)\,=\,1-g^*(\x),\]
and so, as $\x\in\K$ is arbitrary,  $1-g^*\in C_{2d}(\K)$, which implies that $1-g^*$ is a feasible solution of 
$\mathcal{P}$. Next, from (a) and by Fatou's Lemma,
\begin{eqnarray*}
\rho\,=\,\lim_{i\to\infty}\rho_{k_i}\,=\,\liminf_{i\to\infty}\rho_{k_i}&=&\liminf_{i\to\infty}\int_{\R^n} \exp(-g^*_{k_i})\,d\x\\
&\geq&\int_{\R^n}\liminf_{i\to\infty} \exp(-g^*_{k_i})\,d\x\\
&=&\int_{\R^n}\exp(-g^*)\,d\x,
\end{eqnarray*}
and so as $g^*$ is feasible for $\mathcal{P}$, it is an optimal solution of $\mathcal{P}$.
\end{proof}

\subsection{The dual cone of $C_d(\K)$}
\label{dualck}

Recall that $M(\K)$ is the space of finite Borel measures on $\K$. 
Let $C_\infty(\K)\subset\R[\x]$ be the space of polynomials nonnegative on $\K\subseteq\R^n$ and
let $\Delta_\infty\subset \R[\x]^*$
be the set:
\begin{equation}\label{dual-ck-0}
\Delta_\infty\,:=\,\left\{\left(\int_\K \x^\alpha\,d\phi\right), \:\alpha\in \N^n\::\:\phi\in M(\K)\:\right\}.
\end{equation}
It is known that when $\K\subseteq\R^n$ is closed then 
$\Delta_\infty^*=C_\infty(\K)$ and $\Delta_\infty=C_\infty(\K)^*$. See e.g. the proof in Laurent \cite[Prop. 4.5]{laurent}
for $\K=\R^n$, which also works for any closed set $\K\subset\R^n$. The following result is 
a truncated version when $\K$ is compact.

\begin{lemma}
\label{dual-ck}
Let $\K\subset\R^n$ be compact. For every $d\in\N$, the dual $C_d(\K)^*$ of $C_{d}(\K)$ is the set:
\begin{equation}\label{dual-ck-1}
\Delta_d\,:=\,\left\{\left(\int_\K \x^\alpha\,d\phi\right), \:\alpha\in \N^n_d\::\:\phi\in M(\K)\:\right\}.
\end{equation}
\end{lemma}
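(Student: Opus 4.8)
The plan is to identify $C_d(\K)^*$ with a conic hull built from the monomial map and then invoke finite-dimensional conic duality together with the bipolar theorem. Throughout I work in $\R^{s(d)}$ and pair a polynomial $g=\sum_\alpha g_\alpha\x^\alpha\in\R[\x]_d$ with a vector $\y=(y_\alpha)$ by $\langle g,\y\rangle=\sum_\alpha g_\alpha y_\alpha$, so that $\langle g,\v_d(\x)\rangle=g(\x)$. Set $S:=\{\v_d(\x):\x\in\K\}$ and let $D$ denote its convex conic hull. First I would record the easy inclusion $\Delta_d\subseteq C_d(\K)^*$: if $y_\alpha=\int_\K\x^\alpha\,d\phi$ for some $\phi\in M(\K)$, then for every $g\in C_d(\K)$ one has $\langle g,\y\rangle=\int_\K g\,d\phi\ge0$, because $g\ge0$ on $\K$ and $\phi$ is a nonnegative measure.

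For the reverse inclusion I would route everything through the cone $D$. The coordinate of $\v_d(\x)$ indexed by $\alpha=0$ is $\x^0=1$ for every $\x$; hence the linear functional $\y\mapsto y_0$ equals $1$ on all of $S$. This simultaneously shows $0\notin\mathrm{conv}(S)$ and supplies the separating functional needed to bound the coefficients in a Carath\'eodory-type representation. Since $S$ is the continuous image of the compact set $\K$, it is compact, and a compact set whose convex hull avoids the origin has a closed convex conic hull: writing each element of $D$ with at most $s(d)$ generators and using $y_0\ge\sum_i\lambda_i$ to bound the weights, a standard subsequence argument shows $D$ is closed. Thus $D$ is a closed convex cone.

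Next I would compute its dual. By definition $g\in D^*$ iff $\langle g,\v\rangle\ge0$ for all $\v\in D$, and since $D$ is generated by $S$ this holds iff $g(\x)=\langle g,\v_d(\x)\rangle\ge0$ for all $\x\in\K$, i.e. iff $g\in C_d(\K)$. Hence $D^*=C_d(\K)$. Applying the bipolar theorem to the closed convex cone $D$ (see Rockafellar \cite{rockafellar}) gives $C_d(\K)^*=(D^*)^*=D^{**}=D$. Finally I would check $D=\Delta_d$: the inclusion $D\subseteq\Delta_d$ is immediate, since $\sum_i\lambda_i\v_d(\x_i)$ is the truncated moment vector of $\sum_i\lambda_i\delta_{\x_i}\in M(\K)$; conversely, for $\phi\in M(\K)$ the vector $\int_\K\v_d\,d\phi$ is an integral of a $\K$-indexed family lying in the closed convex cone $D$ against a nonnegative measure, hence lies in $D$ (equivalently, invoke Tchakaloff's theorem to replace $\phi$ by a finitely supported measure with the same moments up to degree $d$). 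Combining the two identities yields $C_d(\K)^*=D=\Delta_d$.

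I expect the main obstacle to be the closedness of $D$, equivalently of the truncated moment cone $\Delta_d$: this is exactly the step that uses compactness of $\K$, and without it the bipolar computation would only deliver $C_d(\K)^*=\overline{\Delta_d}$. The normalization coordinate $\x^0=1$, which keeps $0$ out of $\mathrm{conv}(S)$, is precisely what forces the conic hull of the compact set $S$ to be closed; everything else is routine finite-dimensional duality.
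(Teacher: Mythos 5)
Your proof is correct, and it takes a genuinely different route from the paper's in the one step that matters. Both arguments share the same skeleton: the easy inclusion $\Delta_d\subseteq C_d(\K)^*$, the observation that the dual of the moment-type cone is exactly $C_d(\K)$ (your $D^*=C_d(\K)$ via points of $\K$; the paper's $\Delta_d^*=C_d(\K)$ via Dirac measures --- the same idea), and then the bipolar theorem. The difference is how closedness of the relevant cone is obtained. The paper works directly with $\Delta_d$, the cone of truncated moment vectors of \emph{all} measures in $M(\K)$, and proves it is closed by a functional-analytic argument: boundedness of the masses $\phi_k(\K)$ (read off from the coordinate $y_0$, just as in your normalization trick) plus weak-$\star$ sequential compactness of bounded sets in $M(\K)$ for $\K$ compact. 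You instead work with the finitely generated cone $D=\mathrm{cone}\,(\v_d(\K))$, prove its closedness by purely finite-dimensional means (Carath\'eodory with at most $s(d)$ atoms, weights bounded by $y_0$, compactness of $\K$ in $\R^n$), and then must pay for this with an extra identification $D=\Delta_d$, which the paper never needs; your two justifications for that step (the $D^{**}=D$ argument applied to $\int_\K\v_d\,d\phi$, or Tchakaloff) are both valid, and the first has no circularity since closedness of $D$ was established independently. What your route buys: it avoids weak-$\star$ compactness entirely, and it yields the stronger by-product that every element of $C_d(\K)^*$ is the moment vector of an atomic measure with at most $s(d)$ atoms --- precisely the kind of reduction the paper later imports from Anastassiou in the proof of Theorem \ref{vol-suff-cond}. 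What the paper's route buys: it is shorter, since the moment cone of general measures is handled in one stroke without the detour through atomic measures.
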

\begin{proof}
For every $\y=(y_\alpha)\in\Delta_d$ and $f\in C_d(\K)$ with coefficient vector $\f\in\R^{s(d)}$:
\begin{equation}
\label{dual-100}
\langle\y,\f\rangle\,=\,\sum_{\alpha\in\N^n_d}f_\alpha y_\alpha
\,=\,\sum_{\alpha\in\N^n_d}\int_\K f_\alpha \x^\alpha\,d\phi
\,=\,\int_\K f\,d\phi\,\geq\,0.\end{equation}
Since (\ref{dual-100}) holds for all $f\in C_d(\K)$ and $\y\in\Delta_d$, then necessarily $\Delta_d\subseteq C_d(\K)^*$ and similarly,
$C_d(\K)\subseteq\Delta_d^*$. Next,
\begin{eqnarray*}
\Delta_d^*&=&\left\{\f\in\R^{s(d)}\::\:\langle\f,\y\rangle\geq0\quad\forall \y\in\Delta_d\right\}\\
&=&\left\{f\in\R[\x]_d\::\:\int_\K f\,d\phi\geq0\quad\forall \phi\in M(\K)\right\}\\
&\Rightarrow&\Delta_d^*\subseteq C_{d}(\K),
\end{eqnarray*}
and so $\Delta_d^*=C_d(\K)$. Hence the result follows if one proves that $\Delta_d$ is closed, because then 
$C_d(\K)^*=(\Delta_d^*)^*=\Delta_d$, the desired result. So let $(\y^k)\subset\Delta_d$, $k\in\N$, with $\y^k\to\y$ as $k\to\infty$.
Equivalently, $\int_\K\x^\alpha d\phi_k\to y_\alpha$ for all $\alpha\in\N^n_d$.
In particular, the convergence $y^k_0\to y_0$ implies that the sequence of measures $(\phi_k)$, $k\in\N$, is bounded,
that is, $\sup_k\phi_k(\K)<M$ for some $M>0$. As $\K$ is compact, the unit ball of $M(\K)$ is sequentially compact in the weak $\star$ topology $\sigma(M(\K),C(\K))$ where $C(\K)$ is the space of continuous functions on $\K$. Hence there is a finite Borel measure 
$\phi\in M(\K)$ and a subsequence $(k_i)$ such that $\int_\K gd\phi_{k_i}\to\int_\K gd\phi$ as $i\to\infty$, for all $g\in C(\K)$.
In particular, for every $\alpha\in\N^n_d$,
\[y_\alpha\,=\,\lim_{k\to\infty}y^{k}_\alpha\,=\,\lim_{i\to\infty}y^{k_i}_\alpha
\,=\,\lim_{i\to\infty}\int_\K\x^\alpha d\phi_{k_i}\,=\,\int_\K \x^\alpha d\phi,\]
which shows that $\y\in\Delta_d$, and so $\Delta_d$ is closed.
\end{proof}
\end{document}